\newtheorem{theorem}{Theorem}[section]
\newtheorem{lemma}[theorem]{Lemma}
\newtheorem{proposition}[theorem]{Proposition}
\def\half{ \frac{1}{2}}
\def\D{\partial}
\def\nint{\mathop{\diagup\kern-13.0pt\int}}
\def\bas{\begin{align*}}
\def\eas{\end{align*}}
\def\bi{\begin{itemize}}
\def\ei{\end{itemize}}
\def\emph#1{{\it #1}}
\theoremstyle{definition}
\numberwithin{equation}{section}
\begin{document}

\title{Well-posedness for the dispersive Hunter-Saxton equation}

\author{Albert Ai}
\address{Department of Mathematics, University of Wisconsin, Madison}
\email{aai@math.wisc.edu}
\author{Ovidiu-Neculai Avadanei}
\address{Department of Mathematics, University of California at Berkeley}
\email{ovidiu\_avadanei@berkeley.edu}

\begin{abstract}
This article represents a first step towards understanding the well-posedness for the dispersive Hunter-Saxton equation. This problem arises in the study of nematic liquid crystals, and although the equation has formal similarities with the KdV equation, the lack of $L^2$ control gives it a quasilinear character, with only continuous dependence on initial data.

Here, we prove the local and global well-posedness of the Cauchy problem using a normal form approach to construct modified energies, and frequency envelopes in order to prove the continuous dependence with respect to the initial data.
\end{abstract}

\keywords{Hunter-Saxton, global solutions, normal forms}
\subjclass{35Q35, 35Q53}
\maketitle

\addtocontents{toc}{\protect\setcounter{tocdepth}{1}}
\tableofcontents

\section{Introduction}

In this article we consider the Cauchy problem for the dispersive Hunter-Saxton equation
\begin{equation}\label{HS}
\left\{\begin{aligned}
&u_t+uu_x+u_{xxx}= \frac12 \partial_x^{-1}(u_x^2)\\
&u(0)=u_0,
\end{aligned}\right.
\end{equation}
where $u$ is a real-valued function $u:[0,\infty)\times\mathbb{R}\rightarrow\mathbb{R}$. Due to the Galilean invariance of \eqref{HS}, we may fix a definition for $\partial_x^{-1}$,
\[
\partial_x^{-1}f(x)=\int_{-\infty}^xf(y)\, dy,
\]
where $f\in L_x^1(\mathbb{R})$.
The dispersive Hunter-Saxton equation is a perturbation of the Hunter-Saxton equation
\begin{equation}\label{ndHS}
\begin{aligned}
&u_t+uu_x=\frac12 \partial_x^{-1}(u_x^2),
\end{aligned}
\end{equation} 
which was introduced in \cite{16}
as an asymptotic model for the formation of nematic liquid crystals under a director field. The Hunter-Saxton equation \eqref{ndHS} is completely integrable \cite{17, 20} with a bi-Hamiltonian structure \cite{19}. In the periodic case, the local well-posedness and blow up phenomena were studied in \cite{16,24}, while global weak solutions were studied in \cite{4,5}. For the non-periodic case, the Cauchy problem local well-posedness and blow up were studied in \cite{YY}.

The Hunter-Saxton equation is also the high frequency limit of the Camassa-Holm equation,
\begin{align*}
(1-\partial_x^2)u_t=3uu_x-2u_xu_{xx}-uu_{xxx}.
\end{align*}
The local well-posedness and ill-posedness of the Camassa-Holm equation were studied in \cite{9,21,22}. The global existence of strong solutions and blow up phenomena were investigated in \cite{6,7,8,9}.

The dispersive Hunter-Saxton equation \eqref{HS} first appeared in \cite{15} as a dispersive regularization of \eqref{ndHS}. Complete integrability was later observed in \cite{CI}.

In this paper, we initiate the study of the well-posedness for the dispersive Hunter-Saxton equation \eqref{HS}. For this purpose, we use the conserved quantities
\begin{align*}
    E_1(t)&=\int_{\mathbb{R}}u_x(t)^2 \, dx,\\
    E_2(t)&=\int_{\mathbb{R}}u_{xx}(t)^2-u(t)u_x(t)^2 \, dx.
\end{align*}
Throughout, we denote 
\[
X^s=L_x^\infty\cap\dot H_x^1 \cap \dot H_x^{1+s},
\]
where $s\in[0, 1]$. For brevity, we denote $X=X^1$. Our first main result is the following local well-posedness statement:

\begin{theorem}\label{Local well-posedness}
The dispersive Hunter-Saxton equation \eqref{HS} is locally well-posed in $X$. Precisely, for every $R>0$, there exists $T=T(R)>0$ such that for every $u_0\in X$ with $\|u_0\|_X<R$, the Cauchy problem \eqref{HS} has a unique solution $u \in C([0, T], X)$. Moreover,  the solution map $u_0\mapsto u$ from $X$ to $C([0, T], X)$ is continuous.
   \end{theorem}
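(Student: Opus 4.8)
The plan is to follow the standard quasilinear well-posedness scheme: (i) establish a priori bounds for the $X$-norm on a time interval depending only on $R$; (ii) construct solutions by regularization and compactness; (iii) prove a weak Lipschitz (difference) bound in a lower-regularity topology, yielding uniqueness; and (iv) upgrade weak convergence to strong convergence in $C([0,T],X)$ via frequency envelopes to obtain continuous dependence, since a contraction argument is unavailable. A convenient preliminary reduction is to differentiate \eqref{HS}: setting $w=u_x$, one finds $w_t+uw_x+w_{xxx}=-\frac12 w^2$, a transport--Airy equation with quadratic forcing that is better suited to the energy computations.

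For the a priori bounds I would exploit the conserved quantities. Conservation of $E_1$ supplies the homogeneous $\dot H^1$ component for free, while conservation of $E_2$ reduces control of $\|u_{xx}\|_{L^2}^2$ to control of $\int u\,u_x^2$, which is bounded by $\|u\|_{L^\infty}E_1$; thus the $\dot H^1\cap\dot H^2$ norm is controlled once $\|u\|_{L^\infty}$ is. The genuinely new difficulty is propagating the $L^\infty$ bound, since in the absence of $L^2$ control the embedding $\dot H^1\cap\dot H^2\hookrightarrow L^\infty$ fails; this is the source of the quasilinear character. I would split $u=P_{\le 1}u+P_{>1}u$: the high-frequency piece obeys $\|P_{>1}u\|_{L^\infty}\lesssim\|u_x\|_{H^1}$, controlled by the Sobolev norms, while for $P_{\le 1}u$ I would run a direct $L^\infty$ estimate from the equation, where the otherwise dangerous dispersive term is harmless on low frequencies, $\|P_{\le1}u_{xxx}\|_{L^\infty}\lesssim\|u_x\|_{L^2}$, the source term satisfies $\|\partial_x^{-1}(u_x^2)\|_{L^\infty}\le E_1$, and the transport term contributes $\|u\|_{L^\infty}^2$. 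A Gronwall argument then closes the bound on a time interval $T=T(R)$.

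The borderline quadratic interactions that resist direct integration by parts are precisely those resonant with the zero frequency, as the Airy resonance relation $\xi^3=\xi_1^3+\xi_2^3$ on $\xi=\xi_1+\xi_2$ forces $3\xi_1\xi_2(\xi_1+\xi_2)=0$; these I would handle by a normal form, adding cubic corrections to the energies so that their time derivatives cancel the non-perturbative contributions, leaving a modified energy equivalent to the $X^s$-norm together with an almost-conservation law. For existence I would regularize \eqref{HS}, e.g.\ by frequency truncation, obtain smooth solutions satisfying the same uniform bounds, and pass to the limit, recovering time continuity from the equation. Uniqueness follows from a difference estimate: for two solutions $u,v$ the difference $w=u-v$ solves a linear equation with coefficients depending on $u,v$, and one proves a Lipschitz bound for $w$ in a weaker norm with constant depending on $\|u\|_X,\|v\|_X$.

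Finally, for continuous dependence I would use frequency envelopes. Given $u_0^{(n)}\to u_0$ in $X$, I would assign slowly varying envelopes dominating their dyadic pieces, regularize the data, and combine the weak Lipschitz difference bound with the uniform high-frequency tail control furnished by the envelope to show that the regularized solutions form a Cauchy sequence in $C([0,T],X)$; the limit is the solution, and the same comparison yields continuity of the full data-to-solution map. I expect the main obstacle throughout to be the $L^\infty$ propagation in the absence of $L^2$ control: it obstructs the top-order energy estimate through the dispersive term and complicates the frequency-envelope bookkeeping, since $L^\infty$ is not an $\ell^2$-orthogonal quantity, and it is precisely what forces the modified-energy and frequency-envelope machinery in place of a direct fixed-point scheme.
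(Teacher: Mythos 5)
Your overall architecture (frequency splitting for the $L^\infty$ bound, a weak Lipschitz estimate in $L^\infty\cap\dot H^1$ for uniqueness, frequency envelopes for continuous dependence) matches the paper's, but there is a genuine gap in your existence step. You propose to get a priori bounds from the conserved quantities $E_1,E_2$ and then to produce solutions by frequency-truncating the equation and passing to the limit. These two ingredients are incompatible: the truncated equation does not conserve $E_1$ or $E_2$, so the ``same uniform bounds'' for the regularized solutions do not follow from your a priori argument; you would have to redo energy estimates for the regularized flow by hand. Worse, the existence of solutions for the regularized problem is itself not free here: because there is no $L^2$ control on $u$ (only $L^\infty\cap\dot H^1\cap\dot H^2$), you cannot cite any off-the-shelf KdV-type theory, and the Airy semigroup is not bounded on $L^\infty$, so even a mild/ODE formulation of the truncated equation requires care. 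This foundational issue is precisely what the paper's Section 2 addresses: it builds linear well-posedness for the transport--Airy equation $v_t+av_x+b_xv+v_{xxx}=F$ from scratch in $L^2$ and $H^1$ via energy estimates plus a duality (Hahn--Banach) argument, and the local solution is then constructed by a Picard-type iteration $u^{n+1}_t+u^{n+1}_{xxx}+u^nu^{n+1}_x=\frac12\partial_x^{-1}((u^n_x)^2)$, with uniform $X$-bounds for the iterates coming from those linear lemmas rather than from the conservation laws (which the paper reserves for the \emph{global} theorem). Without some substitute for this linear theory, your scheme does not get off the ground.

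A secondary point: your invocation of normal forms to handle ``resonant'' quadratic interactions is unnecessary for this theorem. At the $X=X^1$ level the $\dot H^2$ energy estimate closes directly by integration by parts --- $u_{xx}$ satisfies $v_t+uv_x+2u_xv+v_{xxx}=0$, giving $\frac{d}{dt}\|u_{xx}\|_{L^2}^2\lesssim\|u_x\|_{L^\infty}\|u_{xx}\|_{L^2}^2$ --- so no cubic energy corrections are needed. In the paper, the normal-form modified energy appears only in Section 5, where it is used for the low-regularity ($X^s$, $s<1$) \emph{global} bounds, not for the $X^1$ local result you were asked to prove.
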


   In both the dispersive and nondispersive cases of the Hunter-Saxton equation, the main difficulty is that the source term $\frac12 \partial_x^{-1}(u_x^2)$ is unbounded in any $L^p$ space if $p < \infty$, and in particular, in $L^2$. As a result, it is necessary to consider the problem assuming only pointwise $L^\infty$ control on $u$, similar to the analysis in \cite{YY} for the nondispersive case \eqref{ndHS}. Further, the lack of spatial decay obstructs direct access to local smoothing estimates, so that \eqref{HS} exhibits quasilinear behavior even in the presence of KdV-like dispersion. In particular, our solutions exhibit only continuous dependence on the initial data, instead of Lipschitz dependence.
   
   Our proof follows a bounded iterative scheme which treats separately the high and low frequency components. To prove continuous dependence on the initial data in our quasilinear setting, we have used frequency envelopes, introduced by Tao in \cite{25}. A systematic presentation of the use of frequency envelopes in the study of local well-posedness theory for quasilinear problems can be found in the expository paper \cite{LWN}, which we broadly follow in the present work.
   
Our second result is the following global well-posedness statement:
\begin{theorem}\label{Global well-posedness}
The Cauchy problem \eqref{HS} is globally well-posed in $X$. Moreover, for every $t\geq 0$, we have the global in time bounds
\begin{align*}
    \|u(t)\|_{L_x^\infty}&\lesssim \|u_0\|_{X^0}+t(E_1+E_1^{1/2}),
    \\
    \|u(t)\|^2_{\dot H_x^2}&\lesssim \|u_0\|_{\dot H_x^2}^2+\|u_0\|_{X^0}E_1 + t(E_1+E_1^{1/2})E_1.
\end{align*}

\end{theorem}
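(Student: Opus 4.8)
\emph{The plan.} The local theory (Theorem~\ref{Local well-posedness}) provides a lifespan $T(R)$ depending only on $R=\|u_0\|_X$, so by the standard continuation argument it suffices to establish, for smooth solutions, an a priori bound on $\|u(t)\|_X=\|u(t)\|_{L_x^\infty}+\|u(t)\|_{\dot H_x^1}+\|u(t)\|_{\dot H_x^2}$ that grows at most linearly in $t$ on any finite interval, after which one passes to the limit using the continuous dependence already obtained. The three pieces of the $X$ norm sit in a hierarchy governed by the conserved quantities $E_1,E_2$. Since $E_1(t)=\|u_x(t)\|_{L^2}^2$ is conserved, the homogeneous $\dot H^1_x$ norm is constant in time, $\|u(t)\|_{\dot H^1_x}^2=E_1$, with no further work.

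\emph{Reduction of $\dot H^2$ to $L^\infty$.} I would next use conservation of $E_2$ to trade the $\dot H^2$ bound for an $L^\infty$ bound. Writing $E_2=\|u_{xx}\|_{L^2}^2-\int u u_x^2\,dx$ and using $|\int u u_x^2\,dx|\le\|u\|_{L_x^\infty}\|u_x\|_{L^2}^2=\|u\|_{L_x^\infty}E_1$, one obtains
\[
\|u(t)\|_{\dot H^2_x}^2=E_2+\int u u_x^2\,dx\le E_2(0)+\|u(t)\|_{L_x^\infty}E_1,
\]
and estimating the initial value in the same way gives $E_2(0)\le\|u_0\|_{\dot H^2_x}^2+\|u_0\|_{X^0}E_1$. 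Substituting the $L^\infty$ bound of the theorem then yields exactly the claimed global $\dot H^2$ estimate. Thus both nontrivial bounds reduce to controlling $\|u(t)\|_{L_x^\infty}$.

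\emph{The $L^\infty$ bound: the crux.} I would split $u=u_{lo}+u_{hi}$ into low and high frequencies at unit frequency. For the high part, Bernstein's inequality and Cauchy--Schwarz give $\|u_{hi}\|_{L_x^\infty}\lesssim\sum_{k\ge0}2^{k/2}\|P_k u\|_{L^2}=\sum_{k\ge0}2^{-k/2}\|P_k u_x\|_{L^2}\lesssim\|u_x\|_{L^2}=E_1^{1/2}$, a bound uniform in time that uses no dynamics. It remains to control $\|u_{lo}(t)\|_{L_x^\infty}$, where the evolution enters. The source term is globally bounded, $0\le\tfrac12\partial_x^{-1}(u_x^2)\le\tfrac12 E_1$, and on low frequencies the dispersion is tame, $\|\partial_x^3 u_{lo}\|_{L_x^\infty}\lesssim E_1^{1/2}$ by Bernstein. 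The transport term $uu_x$ is the only one not uniformly controlled by $E_1$; to keep it from forcing merely exponential growth, I would exploit its conservative transport structure by passing to the Lagrangian flow $\dot X=u(t,X)$, along which $uu_x$ integrates out, giving
\[
u(t,X(t))=u_0(X(0))+\int_0^t\Big(-u_{xxx}+\tfrac12\partial_x^{-1}(u_x^2)\Big)(s,X(s))\,ds.
\]
Since every point lies on a characteristic, $\|u(t)\|_{L_x^\infty}\le\|u_0\|_{L_x^\infty}+\tfrac12 E_1 t+\sup_X\big|\int_0^t u_{xxx}(s,X(s))\,ds\big|$, and the low-frequency part of the last integral is $\lesssim E_1^{1/2}t$, so everything is reduced to the high-frequency dispersive contribution along the flow.

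\emph{Main obstacle.} The step I expect to be most delicate is bounding the time-integral of the high-frequency dispersive term $u_{hi,xxx}$ along the characteristics by $\lesssim E_1^{1/2}t$, a quantity not controlled pointwise by any conserved norm; this is precisely the quasilinear, local-smoothing-free behavior emphasized in the introduction. I expect it to require the modified-energy and normal-form machinery together with the frequency-envelope bookkeeping already developed for Theorem~\ref{Local well-posedness}, now coupled to the transport structure, in order to extract the necessary cancellation. Once the $L^\infty$ bound is established, the reductions above close the a priori $X$-norm estimate, the continuation argument upgrades the local solution to a global one, and the two displayed global-in-time bounds follow.
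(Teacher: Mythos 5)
Your reductions are sound and match the paper's: $\dot H^1$ is constant by conservation of $E_1$, the $\dot H^2$ bound follows from conservation of $E_2$ once the $L^\infty$ bound is known, and the high-frequency part of $\|u\|_{L^\infty}$ is handled by Bernstein. But the argument does not close, and the point where you stop --- bounding $\int_0^t u_{hi,xxx}(s,X(s))\,ds$ along characteristics --- is a genuine gap, not a technical loose end. With only $E_1$ conserved and $\dot H^2$ itself growing, $u_{hi,xxx}$ is not even a bounded function, and there is no local smoothing available (the introduction stresses that the lack of spatial decay obstructs it), nor any sign or oscillation structure along the flow of $u$ that would let you integrate it in time. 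Your suggestion that the normal-form/modified-energy machinery would rescue this step is also off target: in the paper that machinery is used only for the low-regularity results (Theorems~\ref{Low regularity local well-posedness} and \ref{Low regularity global well-posedness}), not for Theorem~\ref{Global well-posedness}.

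The fix is to reverse your order of operations: project the equation onto low frequencies \emph{first}, then run characteristics --- and run them for the low-frequency velocity field $u_{\leq 0}$, not for $u$. Applying $P_{\leq 0}$ to \eqref{HS} gives
\begin{equation*}
(u_{\leq 0})_t+(uu_x)_{\leq 0}+(u_{\leq 0})_{xxx}=\tfrac12(\partial_x^{-1}(u_x^2))_{\leq 0},
\end{equation*}
in which the dispersive term is now $(u_{\leq 0})_{xxx}$, bounded pointwise by $\|u_x\|_{L^2}=E_1^{1/2}$ via Bernstein, so the problematic high-frequency dispersive term never appears. One then writes $(uu_x)_{\leq 0}=u_{\leq 0}(u_{\leq 0})_x + \text{errors}$, where the errors $(u_{>0}u_x)_{\leq 0}+[P_{\leq 0},u_{\leq 0}]u_x$ are commutator/paraproduct terms all controlled by $E_1+E_1^{1/2}$ (low-frequency outputs are estimated through $L^2$ of $u_x$). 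Integrating along the flow $q_t=u_{\leq 0}(t,q)$, for which $\frac{d}{dt}u_{\leq 0}(t,q(t,x))$ equals exactly the bounded forcing, yields $\|u_{\leq 0}(t)\|_{L^\infty}\lesssim\|(u_0)_{\leq 0}\|_{L^\infty}+t(E_1+E_1^{1/2})$, and the rest of your argument then goes through verbatim.
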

Its proof relies on Theorem \ref{Local well-posedness} and on the conserved quantities $E_1(t)$ and $E_2(t)$. We remark that the $L^\infty$ estimate holds even for solutions which are only in $X^0 = L^\infty \cap \dot H^1$.

\
   
   Using the $X^1$ well-posedness as a starting point, our third and fourth results extend well-posedness to lower regularity data:
   
\begin{theorem}\label{Low regularity local well-posedness}
For each $ s\in(\frac{1}{2},1)$, the Cauchy problem \eqref{HS} is locally well-posed in $X^s$. 
\end{theorem}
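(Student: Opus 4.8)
The plan is to bootstrap down from the $X=X^1$ theory of Theorem~\ref{Local well-posedness}, following the regularization-plus-frequency-envelope scheme of \cite{LWN}. Given $u_0\in X^s$, I would first approximate it by smooth data $u_0^{(n)}\in X$ with $u_0^{(n)}\to u_0$ in $X^s$ (e.g.\ by frequency truncation) and invoke Theorem~\ref{Local well-posedness} to produce solutions $u^{(n)}\in C([0,T_n],X)$. The heart of the matter is then an a priori $X^s$ bound, uniform in $n$, on a common interval $[0,T]$ with $T=T(\|u_0\|_{X^s})$; once this is in hand, existence in $X^s$ follows by extracting a limit, while uniqueness and continuous dependence are recovered at the level of frequency envelopes.

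For the a priori estimate I would propagate the homogeneous energy $\int |D^{1+s}u|^2\,dx$, with $D=|\partial_x|$, alongside the conserved $E_1=\|u\|_{\dot H^1}^2$ and a separately propagated $L^\infty$ bound as in Theorem~\ref{Global well-posedness}. Differentiating in time, the dispersive term $u_{xxx}$ drops out by skew-adjointness of $\partial_x^3$. Writing $D^{1+s}(uu_x)=u\,D^{1+s}u_x+[D^{1+s},u]u_x$, the transport contribution reduces after integration by parts to $\int u_x\,|D^{1+s}u|^2\,dx$ plus a commutator controlled by Kato--Ponce, both bounded by $\|u_x\|_{L^\infty}\|u\|_{\dot H^{1+s}}^2$. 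For the source term I would use that $D^{1+s}\partial_x^{-1}=\mathcal H D^s$ (with $\mathcal H$ the Hilbert transform), so its contribution is at most $\|u\|_{\dot H^{1+s}}\|u_x^2\|_{\dot H^s}\lesssim \|u_x\|_{L^\infty}\|u\|_{\dot H^{1+s}}^2$ by the fractional Leibniz rule. The \emph{decisive} point is that $s>\tfrac12$ is exactly the threshold for the one-dimensional embedding $\dot H^s\hookrightarrow L^\infty$, which together with the $\dot H^1$ control from $E_1$ lets me bound $\|u_x\|_{L^\infty}$ by the energy itself; Gronwall then closes the estimate on a time depending only on $\|u_0\|_{X^s}$. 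Where the raw bound is borderline --- in particular for the resonant high-high interaction of the source term --- I expect a normal-form modified energy of the kind already used at $X^1$ to absorb the non-skew-adjoint contribution.

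With uniform $X^s$ bounds in hand, I would estimate the difference $w=u-v$ of two solutions at one degree lower regularity. Here $w$ solves $w_t+u\,w_x+w\,v_x+w_{xxx}=\tfrac12\partial_x^{-1}((u_x+v_x)w_x)$, and an energy estimate in $\dot H^s$ (or $X^0$) controls $\|w\|$ in terms of its initial size and the $X^s$ norms of $u,v$. This simultaneously gives uniqueness and shows that the regularized solutions form a Cauchy sequence in the weaker topology, yielding a limit $u\in C([0,T],X^s)$ solving \eqref{HS}. Since this difference estimate necessarily forfeits the top derivative, it only produces Lipschitz dependence in the weaker norm; to recover convergence and continuity in the full $X^s$ topology I would run the frequency-envelope argument of \cite{25,LWN}, propagating an envelope for the data through the a priori estimate and using it to dominate the high-frequency tails of $u^{(n)}-u$ uniformly.

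I expect the obstacle to be twofold. First, everything must be carried out with only $L^\infty$ and homogeneous $\dot H^1,\dot H^{1+s}$ control and no $L^2$ bound on $u$, so every product and commutator must be arranged so that the unbounded low-frequency part of $\partial_x^{-1}(u_x^2)$ is never turned into an $L^2$-critical quantity; it is precisely this low-frequency bookkeeping that the fractional (as opposed to integer) setting makes genuinely delicate and that requires a careful Littlewood--Paley decomposition. Second, transplanting the continuous-dependence argument to fractional $s$ is subtle because the quasilinear difference estimate loses a derivative: the envelope comparison must be sharp enough to convert weak convergence plus uniform $\dot H^{1+s}$ bounds into norm convergence, and verifying that the envelopes propagate with the correct constants at every dyadic scale is where I anticipate the bulk of the technical work.
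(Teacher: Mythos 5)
Your proposal is correct in its overall architecture and follows the same scheme as the paper: regularize the data by frequency truncation, obtain uniform $X^s$ bounds for the smooth solutions, prove a difference estimate one full derivative lower to get uniqueness and convergence in a weaker topology, and then run the frequency-envelope argument of \cite{LWN} to recover convergence and continuous dependence in the endpoint $X^s$ topology; this is precisely the structure of Sections~\ref{s:lin} and~\ref{s:lowreg}. The one genuinely different component is the mechanism for the uniform $X^s$ bound. You propagate $\int |D^{1+s}u|^2\,dx$ directly, using Kato--Ponce and fractional Leibniz together with the embedding $\|u_x\|_{L^\infty}\lesssim E_1^{1/2}+\|u\|_{\dot H^{1+s}}$ (valid since $s>\frac12$), closing a Riccati-type inequality on a time $T(\|u_0\|_{X^s})$; this is sufficient for the local theorem and is simpler than what the paper does. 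The paper instead reuses the normal-form modified energy of Lemma~\ref{Modified energy estimates} via Proposition~\ref{Uniform bounds}: since $\|Au\|_{L^2}\lesssim E_1^{1/2}$ is conserved, the modified energy's time derivative is only \emph{linear} in $\|u\|_{\dot H^{1+s}}$, which is what yields the global-in-time polynomial growth bounds of Theorem~\ref{Low regularity global well-posedness}. Your direct estimate cannot produce those global bounds, but for Theorem~\ref{Low regularity local well-posedness} alone it is legitimate. (Incidentally, your remark that a normal-form energy was ``already used at $X^1$'' misreads the paper: the $X^1$ theory is a direct iteration, and normal forms enter only at low regularity.)

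Two caveats, neither fatal. First, you invoke Theorem~\ref{Local well-posedness} for the truncated data $u_0^{(n)}$, whose lifespans $T_n$ a priori shrink as $\|u_0^{(n)}\|_{X}\to\infty$; to place all solutions on a common interval you need either the global result (Theorem~\ref{Global well-posedness} plus Lemma~\ref{Persistence of regularity}, which is how the paper sidesteps this) or an explicit continuation argument combining your $X^s$ bound with the $\dot H^2$ energy estimate $\frac{d}{dt}\|u_x\|_{\dot H^1_x}^2\lesssim\|u_x\|_{L_x^\infty}\|u_x\|_{\dot H^1_x}^2$ of Lemma~\ref{H_x^1 well-posedness} to keep the $X$ norm finite up to time $T(R)$. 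Second, the difference estimate in $L^\infty\cap\dot H^s$ is where the bulk of the paper's technical work lies (the paraproduct analysis of Lemmas~\ref{lin-full} and~\ref{Regularity for differences}, needed exactly because of the absence of $L^2$ control and the negative-order products like $z_x(u_x+v_x)\in\dot H^{s-1}$); you correctly identify both the statement and the difficulty, but this step is substantially more than a routine energy estimate.
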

   
The local well-posedness of Theorem~\ref{Low regularity local well-posedness} is in the same sense as in Theorem \ref{Local well-posedness}. Here, we leverage Theorem \ref{Local well-posedness} to construct $X^s$ solutions as limits of sequences of smooth solutions, by proving an estimate for differences of two solutions in order to establish convergence. This in turn is a consequence of an estimate for the linearized equation associated to \eqref{HS},
  \begin{equation}\label{linearized HS}
      w_t+(uw)_x+w_{xxx}=\partial_x^{-1}(u_xw_x).
  \end{equation}
    
 \begin{theorem}\label{Low regularity global well-posedness}
For each $ s\in(\frac{1}{2},1)$, the Cauchy problem \eqref{HS} is globally well-posed in $X^s$. Moreover, for every $t\geq 0$,
\begin{equation}\label{low-reg-growth-bounds}
\begin{aligned}
    \|u(t)\|_{L_x^\infty}&\lesssim \|u_0\|_{X^0}+t(E_1+E_1^{1/2})\\
    \|u(t)\|_{\dot H_x^{1+s}}^2 &\lesssim \langle t\rangle^4 E_1^2 \langle \|u_0\|_{X^0}+ E_1\rangle^2 +\|u_0\|^2_{\dot H_x^{1+s}}.
\end{aligned}
\end{equation}
\end{theorem}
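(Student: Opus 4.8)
The strategy is to upgrade the local theory of Theorem~\ref{Low regularity local well-posedness} to a global statement by establishing an a priori bound on $\|u(t)\|_{X^s}$ that grows at most polynomially in time; the continuation criterion from the local theory then yields global existence, and the bounds \eqref{low-reg-growth-bounds} are obtained by passing this a priori bound to the limit. Since $X^s\hookrightarrow X^0$, the $L_x^\infty$ estimate in \eqref{low-reg-growth-bounds} is identical to the one in Theorem~\ref{Global well-posedness}, whose proof only uses $X^0$ regularity, so it requires nothing new. The $\dot H_x^1$ component is controlled for free, being exactly the conserved quantity $E_1$. Hence the whole problem reduces to a polynomially growing bound on $\|u(t)\|_{\dot H_x^{1+s}}$, and it is essential that this bound depend only on $E_1$, $\|u_0\|_{X^0}$, and $\|u_0\|_{\dot H_x^{1+s}}$, so that it survives an approximation by smooth data.

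I would first isolate why a direct energy estimate fails. Setting $w=u_x$, the equation \eqref{HS} becomes $w_t+uw_x+w_{xxx}=-\tfrac12 w^2$, so in $\tfrac{d}{dt}\|w\|_{\dot H_x^s}^2=\tfrac{d}{dt}\|u\|_{\dot H_x^{1+s}}^2$ the dispersive term integrates away, while the transport term and the quadratic source combine, after extracting paraproducts, into a net resonant contribution of the form $\int u_x (|D|^s w)^2\,dx$, where $|D|$ is the multiplier with symbol $|\xi|$. This term is bounded only by $\|u_x\|_{L_x^\infty}\|u\|_{\dot H_x^{1+s}}^2$, and since $s>\tfrac12$ the one-dimensional Gagliardo--Nirenberg inequality gives $\|u_x\|_{L_x^\infty}\lesssim E_1^{\frac12(1-\frac1{2s})}\|u\|_{\dot H_x^{1+s}}^{1/(2s)}$; the resulting differential inequality is strictly superlinear in $\|u\|_{\dot H_x^{1+s}}^2$ and blows up in finite time. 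Interpolating the $\dot H_x^2$ bound of Theorem~\ref{Global well-posedness} against $E_1$ also fails, because that bound involves $\|u_0\|_{\dot H_x^2}$, which is unavailable for $X^s$ data.

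The remedy, in the spirit of the conserved modified energy $E_2$ (which realizes $\|u\|_{\dot H_x^2}^2$ plus the cubic correction $-\int u u_x^2$), is to construct a modified energy $\E^{1+s}(u)=\|u\|_{\dot H_x^{1+s}}^2+(\text{cubic correction})$ whose correction is controlled by $\|u\|_{L_x^\infty}E_1$, so that $\E^{1+s}(u)\simeq\|u\|_{\dot H_x^{1+s}}^2$, and whose time derivative is genuinely perturbative:
\[
\frac{d}{dt}\E^{1+s}(u)\lesssim \|u\|_{L_x^\infty}\,E_1\,\|u\|_{\dot H_x^{1+s}}.
\]
The correction is designed by normal form: the dispersive part of its time derivative, through the identity $\xi_1^3+\xi_2^3+\xi_3^3=3\xi_1\xi_2\xi_3$ on $\xi_1+\xi_2+\xi_3=0$, is made to cancel the resonant term $\int u_x(|D|^s w)^2$. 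The apparent small divisor $\xi_1\xi_2\xi_3$ is harmless here because the factor $u_x$ supplies a compensating power of its own frequency (equivalently, $u_x$ carries no zero Fourier mode), so the transformation in fact gains derivatives and moves them off the velocity; what remains is a quartic remainder estimated by $\|u\|_{L_x^\infty}E_1\|u\|_{\dot H_x^{1+s}}$, with the bare $u$ now measured in $L_x^\infty$. Granting this, the estimate closes linearly: $\tfrac{d}{dt}\|u\|_{\dot H_x^{1+s}}\lesssim \|u\|_{L_x^\infty}E_1$, and inserting the linear-in-$t$ bound $\|u\|_{L_x^\infty}\lesssim\|u_0\|_{X^0}+t(E_1+E_1^{1/2})$, integrating in time, and then squaring produces exactly the $\langle t\rangle^4$ growth of \eqref{low-reg-growth-bounds}.

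To make this rigorous at $X^s$ regularity I would approximate: take $u_0^{(n)}=P_{\le n}u_0\in X$, solve globally by Theorem~\ref{Global well-posedness}, and apply the modified-energy estimate to the smooth solutions $u^{(n)}$, obtaining bounds on $\|u^{(n)}(t)\|_{X^s}$ that are uniform in $n$ because their right-hand sides involve only $E_1^{(n)}$, $\|u_0^{(n)}\|_{X^0}$ and $\|u_0^{(n)}\|_{\dot H_x^{1+s}}$, all of which converge. The difference estimate for the linearized flow \eqref{linearized HS} underlying Theorem~\ref{Low regularity local well-posedness} then shows $(u^{(n)})$ is Cauchy in a weaker topology; the limit lies in $C([0,T],X^s)$, solves \eqref{HS}, inherits the bounds, and is global. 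The main obstacle is the construction of $\E^{1+s}$ and the proof that its time derivative is perturbative in the displayed sense: this is where the normal form transformation and the accompanying paradifferential and commutator estimates must be executed, and where the gain afforded by the dispersion --- converting the fatal factor $\|u_x\|_{L_x^\infty}$ into the benign $\|u\|_{L_x^\infty}E_1$ --- is decisive. A minor technical point is the convergence $u_0^{(n)}\to u_0$ in $L_x^\infty$, which holds because $u_0\in\dot H_x^1\cap\dot H_x^{1+s}$ with $s>\tfrac12$ forces $u_{0,x}\in L_x^\infty$.
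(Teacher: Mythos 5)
Your proposal follows essentially the same route as the paper: a normal-form-inspired modified energy for $\dot H^{1+s}$ (the paper's $\tilde E$ built from $A=D^sP_{>0}$ and the KdV-type correction) whose cubic correction is $O(E_1\|u\|_{L^\infty_x})$ and whose time derivative is quartic and perturbative, combined with the $X^0$-level $L^\infty$ bound from Theorem~\ref{Global well-posedness} to integrate the resulting inequality into the $\langle t\rangle^4$ growth, with global well-posedness then following from the low-regularity local theory built on smooth approximations and the linearized difference estimates. The differences are only in presentation (e.g., the paper absorbs the energy-equivalence error $CE_1\|u\|_{L^\infty_{t,x}}+E_1$ inside the square root before integrating, and handles low frequencies via the projection $P_{>0}$ rather than a frequency gain from $u_x$), not in substance.
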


   \

  To prove Theorem \ref{Low regularity global well-posedness}, we construct a modified energy functional for $\dot H^{1+s}$ which is based on the quadratic normal form for \eqref{HS}. The approach of constructing normal form inspired modified energies in the quasilinear setting was first introduced by Hunter-Ifrim-Tataru-Wong \cite{Burgers-Hilbert} which considered the Burgers-Hilbert equation. This approach was further developed in the gravity water wave setting by Hunter-Ifrim-Tataru in \cite{HIT}, which established almost-global well-posedness, and in the Benjamin-Ono setting by Ifrim-Tataru \cite{BO} which established dispersive decay.
  
  \
  
Our paper is organized as follows. In Section~\ref{s:prelim}, we present some existence results at various degrees of regularity for linear equations that arise throughout the proofs of the main results. In Section~\ref{s:lwp}, using an iterative scheme, we prove the higher regularity local well-posedness result, while in Section~\ref{s:gwp}, by using the conserved quantities $E_1$ and $E_2$, we show that the dispersive Hunter-Saxton equation \eqref{HS} is globally well-posed.

Section~\ref{s:nf} analyzes a modified energy for the equation, which is based on the normal form associated to the Hunter-Saxton equation, in order to obtain bounds on the growth of the $X^s$-norm, whereas Section~\ref{s:lin} discusses an estimate for the linearized equation \eqref{linearized HS}, as well as one for differences of solutions. These results are then used to prove the low regularity local well-posedness result in Section~\ref{s:lowreg}.

\subsection{Acknowledgements}

This material is based upon work supported by the National Science Foundation under Grant No. DMS-1928930 while the authors participated in a program hosted by the Mathematical Sciences Research Institute in Berkeley, California, during the Spring 2021 semester.

The authors would like to thank Mihaela Ifrim and Daniel Tataru for many helpful discussions. We also thank John Hunter for a discussion about the history and derivation of the Hunter-Saxton equation.

   \section{Preliminaries}\label{s:prelim}
   
   In this section we state and prove some results that will be used in the sequel. We begin by studying well-posedness for a linear equation which will be used in the iteration for the proof of Theorem \ref{Local well-posedness}.
   
   We first prove well-posedness and energy estimates for initial data in $L^2$.
   
   \begin{lemma}\label{L_x^2 well-posedness}
   Let $T>0$, $a,b\in L_t^\infty([0, T], \dot{W}^{1,\infty})$, $F\in L_t^1([0, T], L_x^2)$, $v_0\in L_x^2(\mathbb{R})$. Then the Cauchy problem
   \begin{equation}\label{linear}\left\{\begin{aligned}
        &v_t+av_x+b_xv+v_{xxx}=F\\
        &v(0)=v_0
        \end{aligned}\right.
    \end{equation}\\
    admits a unique solution $v\in L_t^\infty([0, T], L_x^2)$ which satisfies the energy estimate
    \begin{equation*}
     \frac{d}{dt} \|v\|_{L^2}^2 \lesssim    \|F\|_{L_x^2}\|v\|_{L_x^2}+(\|a_x\|_{L_x^\infty}+\|b_x\|_{L_x^\infty})\|v\|_{L_x^2}^2.
    \end{equation*}
   \end{lemma}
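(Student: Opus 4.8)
The core of the proof is the a priori energy estimate, which I would establish first for sufficiently smooth solutions and then justify in general by approximation. Differentiating gives $\frac{d}{dt}\|v\|_{L^2}^2 = 2\int_{\mathbb{R}} v\, v_t\, dx$, and substituting $v_t = F - av_x - b_x v - v_{xxx}$ from \eqref{linear} reduces matters to four terms handled by integration by parts. The transport term contributes $-2\int v a v_x\, dx = \int a_x v^2\, dx$, bounded by $\|a_x\|_{L_x^\infty}\|v\|_{L_x^2}^2$; the zeroth-order term is $-2\int b_x v^2\, dx$, bounded by $2\|b_x\|_{L_x^\infty}\|v\|_{L_x^2}^2$; and the source term is controlled by $2\|F\|_{L_x^2}\|v\|_{L_x^2}$ via Cauchy--Schwarz. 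The crucial observation is that the dispersive term is skew-adjoint on $L_x^2$: integrating by parts twice,
\[
-2\int_{\mathbb{R}} v\, v_{xxx}\, dx = 2\int_{\mathbb{R}} v_x v_{xx}\, dx = \int_{\mathbb{R}} \partial_x(v_x^2)\, dx = 0,
\]
so the third-order term makes no contribution to the $L_x^2$ energy. Combining these bounds yields the stated estimate.

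Given the energy estimate, uniqueness is immediate. If $v_1, v_2$ are two solutions with the same data and source, their difference $w = v_1 - v_2$ solves the homogeneous equation with $w(0)=0$, so $\frac{d}{dt}\|w\|_{L^2}^2 \lesssim (\|a_x\|_{L_x^\infty}+\|b_x\|_{L_x^\infty})\|w\|_{L_x^2}^2$, and since $a_x, b_x \in L_t^\infty L_x^\infty$, Gr\"onwall's inequality forces $w \equiv 0$ on $[0,T]$.

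For existence I would use a parabolic regularization. Introduce, for $\epsilon > 0$,
\[
v^\epsilon_t + a v^\epsilon_x + b_x v^\epsilon + v^\epsilon_{xxx} + \epsilon v^\epsilon_{xxxx} = F, \qquad v^\epsilon(0) = v_0,
\]
where the leading term $\epsilon\partial_x^4$ is dissipative and generates an analytic semigroup on $L_x^2$, relative to which $a\partial_x + b_x + \partial_x^3$ is a lower-order perturbation of relative bound zero; standard semigroup theory (mollifying the coefficients $a,b$ first if convenient) then produces $v^\epsilon$. Repeating the energy computation, the extra term contributes $-2\epsilon\int v^\epsilon v^\epsilon_{xxxx}\, dx = -2\epsilon\|v^\epsilon_{xx}\|_{L_x^2}^2 \leq 0$, so the energy estimate, and hence the Gr\"onwall bound on $\|v^\epsilon\|_{L_t^\infty L_x^2}$, holds uniformly in $\epsilon$, depending only on $\|v_0\|_{L_x^2}$, $\|F\|_{L_t^1 L_x^2}$, $\|a_x\|_{L_x^\infty}$, and $\|b_x\|_{L_x^\infty}$. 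I would then extract a weak-$*$ limit $v^\epsilon \rightharpoonup v$ in $L_t^\infty L_x^2$, verify that $v$ solves \eqref{linear} in the distributional sense, and invoke uniqueness to conclude convergence of the whole family.

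The main obstacle is the existence step rather than the estimate. The transport term $a\partial_x$ is unbounded on $L_x^2$, so one cannot simply perturb the Airy group, and the coefficients are only Lipschitz, so care is needed to propagate the uniform bound through the mollification of $a$ and $b$; here it is essential that only $\|a_x\|_{L_x^\infty}$ and $\|b_x\|_{L_x^\infty}$ enter, as these are stable under mollification. An alternative that avoids the semigroup construction is a duality argument: the formal adjoint $L^* w = -\partial_x(aw) + b_x w - w_{xxx}$ is an equation of the same type with reversed time direction, hence satisfies the analogous energy estimate, and existence of a weak solution then follows from the adjoint estimate by the Hahn--Banach/Lions method.
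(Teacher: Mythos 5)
Your energy estimate and uniqueness argument coincide with the paper's: differentiate $\|v\|_{L^2}^2$, substitute the equation, integrate by parts so that only $a_x$ and $b_x$ appear, use that $\partial_x^3$ is skew-adjoint, and close with Gr\"onwall. The divergence is in the existence step, where you take a genuinely different route (parabolic regularization rather than the paper's duality/Hahn--Banach argument), and there your argument has a real gap. The hypothesis is $a \in L_t^\infty \dot{W}^{1,\infty}$ --- the \emph{homogeneous} space --- so only $a_x$ is assumed bounded; $a$ itself may grow linearly in $x$. For such $a$ the claim that $a\partial_x + b_x + \partial_x^3$ is a perturbation of $\epsilon\partial_x^4$ with relative bound zero is false: take $a(x)=x$ and the translates $v_n(x)=\varphi(x-n)$ of a fixed bump, for which $\|a(v_n)_x\|_{L_x^2}\sim n$ while $\|v_n\|_{L_x^2}+\|\partial_x^4 v_n\|_{L_x^2}$ is independent of $n$, so no inequality $\|av_x\|_{L^2}\le \delta\|v_{xxxx}\|_{L^2}+C_\delta\|v\|_{L^2}$ can hold. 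The same unboundedness breaks the variational (Lions) construction of $v^\epsilon$, since the form $\int a v_x w\,dx$ is then not bounded on $H^2\times H^2$. Mollifying $a$ does not help, because mollification preserves linear growth; and there is the further (lesser) point that $a,b$ are merely $L^\infty$ in time, so non-autonomous analytic semigroup theory, which wants some time regularity of the generators, is not off the shelf either. Note that in the paper's applications $a$ is always bounded ($a=u^n$ or $u \in X$), but the lemma as stated does not assume this, so as written your primary argument proves a weaker statement.

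The gap is repairable within your scheme: replace $a$ by $a_R(t,x)=a\bigl(t,\max(-R,\min(x,R))\bigr)$, which is bounded with $\|(a_R)_x\|_{L^\infty}\le\|a_x\|_{L^\infty}$; your semigroup argument then produces solutions $v^{\epsilon,R}$, the energy bound is uniform in both $\epsilon$ and $R$ because only $\|(a_R)_x\|_{L^\infty}$ and $\|b_x\|_{L^\infty}$ enter it, and one passes to the weak-$\ast$ limit first in $R$, then in $\epsilon$ (the pairings against test functions converge since $(a_R)_x\to a_x$ a.e.\ with a uniform bound). Alternatively, the duality argument you sketch in your final sentence is precisely the paper's proof: the adjoint problem is an equation of the same type backward in time, its energy estimate involves only $a_x$ and $b_x$, and Hahn--Banach then yields a weak solution with no construction by perturbation at all --- so it handles unbounded $a$ for free. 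The cleanest fix is to promote that closing remark to the actual existence proof.
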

   \begin{proof}
    Let us assume that $v$ is a solution to the Cauchy problem. We have
    \begin{equation}\label{timeD}
    \begin{aligned}
        \frac{d}{dt}\int_{\mathbb{R}}v^2(t) \, dx&=2\int_{\mathbb{R}}v(t)v_{t}(t) \, dx\\
        &=2\int_{\mathbb{R}}v(t)(F(t)-a(t)v_x(t)-b_x(t)v(t)-v_{xxx}(t)) \, dx\\
        &=2\int_{\mathbb{R}}v(t)F(t)\,  dx+\int_{\mathbb{R}}a_x(t)v^2(t) \, dx - 2\int_{\mathbb{R}}b_x(t)v^2(t) \, dx\\
        &\lesssim \|v(t)\|_{L_x^2}\|F(t)\|_{L_x^2}+\|v(t)\|_{L_x^2}^2(\|a_x(t)\|_{L_x^\infty}+\|b_x(t)\|_{L_x^\infty}).
    \end{aligned}
    \end{equation}
    We obtain the desired energy estimate, which also establishes uniqueness. 
    
    \
    
    It remains to show existence, for which we follow a standard duality argument.  We first determine the adjoint problem. For an arbitrary $w$, a formal computation shows that
    \begin{align*}
      \int_0^T\int_{\mathbb{R}}(v_t+av_x+b_xv+v_{xxx})w\, dx\, dt &=\int_{\mathbb{R}}v(T)w(T)\, dx-\int_{\mathbb{R}}v(0)w(0)\, dx\\
        &\quad -\int_0^T\int_{\mathbb{R}}(w_t+aw_x+a_xw+b_xw+w_{xxx})v\, dx\, dt.
    \end{align*}
    We write $\displaystyle w_t+aw_x+(a_x+b_x)w+w_{xxx}=G$ and $w(T) = w_T$.
    Thus,
    \begin{align*}
       \int_0^T\int_{\mathbb{R}}Fw\, dx\, dt+\int_{\mathbb{R}}v_0w(0)\,  dx=\int_{\mathbb{R}}v(T)w_T\,  dx-\int_0^T\int_{\mathbb{R}}Gv\, dx \, dt
    \end{align*}
    and we have the adjoint problem
    \begin{equation}\left\{\begin{aligned}
        &w_t+aw_x+(a_x+b_x)w+w_{xxx}=G\\
        &w(T)=w_T.
    \end{aligned}\right.     \end{equation}
    
    Using the energy estimate of the original equation, we have
    \[ \displaystyle \|w(t)\|_{L_x^2}\lesssim \|w_T\|_{L_x^2}+\|G\|_{L_t^1L_x^2}. \]
    In particular, we conclude that if the adjoint problem has a solution, then it is unique.
    
    \
    
    Let 
    \begin{equation*}
    \begin{aligned}
     Y &=\{(g,\tilde{G}) \in L_x^2 \times L_t^1 L_x^2([0,T]\times\mathbb{R}) \ \vert \\
     &\qquad \text{there exists }h\in L_t^\infty L_x^2\text{ solving the adjoint problem with } (w_T, G) = (g, \tilde G)\ \}.
    \end{aligned}
    \end{equation*}
    We define the functional $\alpha: Y\rightarrow\mathbb{R}$ by 
    \[\displaystyle \alpha(g,\tilde G)=\int_0^T\int_{\mathbb{R}}Fh\, dx\, dt+\int_{\mathbb{R}}v_0h(0) \, dx,\]
    which is well-defined by uniqueness for the adjoint problem. It is also bounded, as
    \begin{align*}
        \vert\alpha(g,\tilde{G}) \vert&\lesssim \|v_0\|_{L_x^2}\|h(0)\|_{L_x^2}+\|F\|_{L_t^1L_x^2}\|h\|_{L_t^\infty L_x^2}\\
        &\lesssim \|v_0\|_{L_x^2}(\|g\|_{L_x^2}+\|\tilde{G}\|_{L_t^1L_x^2})+\|F\|_{L_t^1L_x^2}(\|g\|_{L_x^2}+\|\tilde{G}\|_{L_t^1L_x^2})\\
        &\lesssim (\|v_0\|_{L_x^2}+\|F\|_{L_t^1L_x^2})(\|g\|_{L_x^2}+\|\tilde{G}\|_{L_t^1L_x^2}).
    \end{align*}
    Using the Hahn-Banach Theorem, we extend $\alpha$ to a functional $\beta$ defined on $L_x^2\times L_t^1 L_x^2$. This uniquely corresponds an element of $L_x^2\times L_t^\infty L_x^2$, whose second component is the desired solution $v$.

   \end{proof}

   We extend the previous result to the case when the initial data is in $H^1$:
   
   \begin{lemma}\label{H_x^1 well-posedness}
   Let $T>0$, $a,b\in L_t^\infty \dot{W}_x^{1,\infty}$, $b\in L_t^\infty\dot{H}_x^2$, $F\in L_t^1H_x^1$, and $v_0\in H_x^1$. Then the Cauchy problem \eqref{linear} has a unique solution $v\in L_t^\infty H_x^1$ which satisfies the energy estimate
   \begin{equation*}
       \frac{d}{dt}\|v\|_{\dot H^1_x}^2 \lesssim (\|F\|_{\dot H_x^1} + \|b_{xx}\|_{L^2_x} \|v\|_{L^\infty_x}) \|v\|_{\dot H^1_x} + (\|a_x\|_{L_x^\infty}+\|b_x\|_{L_x^\infty})\|v\|_{\dot H^1_x}^2.
   \end{equation*}
     In particular, if $u$ is a solution of the dispersive Hunter-Saxton equation \eqref{HS}, then     
     \begin{equation*}
     \frac{d}{dt} \|u_x\|_{\dot H^1_x}^2 \lesssim    \|u_x\|_{L_x^\infty}\|u_x\|_{\dot H_x^1}^2.
    \end{equation*}
    \end{lemma}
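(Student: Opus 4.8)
The plan is to obtain the $\dot H^1_x$ energy estimate by differentiating \eqref{linear} once in $x$ and then applying the $L^2$ computation of Lemma~\ref{L_x^2 well-posedness} to $w := v_x$; existence and uniqueness in $H^1$ are then inherited from the $L^2$ theory together with an approximation argument. First I would record the equation for $w$. Differentiating \eqref{linear} in $x$ and regrouping the zeroth-order terms gives
\[
w_t + a w_x + (a_x + b_x) w + w_{xxx} = F_x - b_{xx} v,
\]
which is again of the form \eqref{linear}, now with transport coefficient $a$, zeroth-order coefficient $a_x + b_x$, and forcing $F_x - b_{xx} v$. The hypothesis $b \in L_t^\infty \dot H_x^2$ is precisely what is needed to make sense of the new inhomogeneous term $b_{xx} v$.

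Next I would run the basic $L^2$ energy computation on this equation, multiplying by $w$ and integrating as in \eqref{timeD}. The dispersive term contributes nothing, since $\int w w_{xxx}\,dx = 0$ after integration by parts, and the transport term integrates by parts to $\tfrac12\int a_x w^2\,dx$. The zeroth-order terms are controlled by $(\|a_x\|_{L_x^\infty} + \|b_x\|_{L_x^\infty})\|w\|_{L^2_x}^2$, the forcing by $\|F_x\|_{L^2_x}\|w\|_{L^2_x}$, and the one genuinely new contribution by $|\int b_{xx} v w\,dx| \le \|b_{xx}\|_{L^2_x}\|v\|_{L^\infty_x}\|w\|_{L^2_x}$. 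Since $\|w\|_{L^2_x} = \|v\|_{\dot H^1_x}$ and $\|F_x\|_{L^2_x} = \|F\|_{\dot H^1_x}$, assembling these bounds yields the claimed estimate. Here the control of $b_{xx} v$ uses only $\|v\|_{L^\infty_x}$, which in one dimension is dominated by $\|v\|_{H^1_x}$; this is the sole term that differs structurally from the $L^2$ case and the reason for the $\dot H^2$ hypothesis on $b$.

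For existence and uniqueness, I would combine this a priori bound with Lemma~\ref{L_x^2 well-posedness}. Uniqueness in $L_t^\infty H_x^1 \subset L_t^\infty L_x^2$ is immediate from the $L^2$ uniqueness. For existence, the subtlety is that the forcing $F_x - b_{xx}v$ of the $w$-equation is only known to lie in $L_t^1 L_x^2$ once $v \in L^\infty_x$, which the $L^2$ theory does not by itself provide. I would resolve this by a standard regularization: mollify $v_0$ and $F$ (and, if necessary, the coefficients), apply Lemma~\ref{L_x^2 well-posedness} to the regularized problems to obtain solutions regular enough to justify differentiation, propagate the uniform $\dot H^1_x$ bound via Gronwall from the energy estimate, and pass to the limit to produce $v \in L_t^\infty H_x^1$. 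I expect this passage — reconciling the differentiated equation with the $L^2$ existence so that $w$ is genuinely $v_x$ — to be the main technical obstacle, as opposed to the energy inequality itself, which is routine.

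Finally, for the stated consequence for \eqref{HS}, I would apply the estimate not to $u$ but to $\phi := u_x$. Differentiating \eqref{HS} in $x$ shows that $\phi$ solves $\phi_t + u\phi_x + \phi_{xxx} = -\tfrac12 u_x^2$, i.e. \eqref{linear} with $a = u$, $b = 0$, and $F = -\tfrac12 u_x^2$. Then $b \equiv 0$ removes the $b_{xx}$ and $b_x$ terms, $\|a_x\|_{L_x^\infty} = \|u_x\|_{L_x^\infty}$, and $\|F\|_{\dot H^1_x} = \|u_x u_{xx}\|_{L^2_x} \le \|u_x\|_{L_x^\infty}\|u_x\|_{\dot H^1_x}$, so that every term on the right-hand side carries the factor $\|u_x\|_{L_x^\infty}\|u_x\|_{\dot H^1_x}^2$, which gives the final inequality.
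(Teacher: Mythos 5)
Your derivation of the a priori $\dot H^1_x$ estimate is exactly the paper's: differentiate \eqref{linear}, view $w = v_x$ as solving an equation of the same form with zeroth-order coefficient $a_x + b_x$ and forcing $F_x - b_{xx}v$, and run the $L^2$ computation of Lemma~\ref{L_x^2 well-posedness}. Your treatment of the Hunter--Saxton application (taking $a = u$, $b = 0$, $F = -\tfrac12 u_x^2$) is also correct, and in fact cleaner than the paper's, which instead absorbs the quadratic term as a zeroth-order term; note that the paper's stated choice $b = -u_x/2$ is a misprint, since the differentiated equation $v_t + uv_x + \tfrac12 u_x v + v_{xxx} = 0$ for $v = u_x$ corresponds to $b_x = \tfrac12 u_x$, i.e.\ $b = u/2$.

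The gap is in your existence argument. Mollifying $v_0$ and $F$ accomplishes nothing: Lemma~\ref{L_x^2 well-posedness} contains no smoothing or persistence-of-regularity statement, so its solutions remain merely $L_t^\infty L_x^2$ no matter how smooth the data and forcing are --- persistence of regularity is precisely what this lemma is supposed to establish, so it cannot be invoked. The regularization that works, and the one the paper uses, is a frequency truncation of the coefficient: replace $b$ by $b_{\leq m}$, so that $(b_{\leq m})_{xx} \in L_x^\infty$ by Bernstein, and hence the forcing $F_x - (b_{xx})_{\leq m} v^m$ of the differentiated equation lies in $L_x^2$ as soon as $v^m \in L_x^2$. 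One then solves the differentiated equation by Lemma~\ref{L_x^2 well-posedness} and identifies its solution with $v^m_x$ (this is your ``main technical obstacle,'' resolved via $L^2$ uniqueness), obtaining $v^m \in L_t^\infty H_x^1$ with bounds uniform in $m$ from Gr\"onwall. The limit passage then needs two further steps that your sketch omits: the differences $v^{m+l} - v^m$ solve an equation with source $-(b_x)_{m<\cdot\leq m+l}\, v^m$, which Bernstein and Sobolev bound by $2^{-m}\|b_{xx}\|_{L_x^2}\|v^m\|_{H_x^1}$, so that $v^m$ is Cauchy in $L_t^\infty L_x^2$; and the limit is placed in $L_t^\infty H_x^1$ not by this convergence itself but by the weak-compactness upgrade of Lemma~\ref{Regularity upgrade for limits} applied to the uniformly bounded sequence.
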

   \begin{proof}
   We first consider the regularized equation 
   \[ v_t+v_{xxx}+av_x+(b_{\leq m})_xv=F.
   \]
   By applying Lemma $\ref{L_x^2 well-posedness}$, we obtain a unique solution $v^m\in L_t^\infty L_x^2$. We first observe that $v^m\in L_t^\infty H_x^1$. Indeed, note that $v^m_x$ formally satisfies
  \begin{equation}\label{mollified}\begin{aligned}
       \tilde{v}_t+\tilde{v}_{xxx}+(a_x+(b_{\leq m})_x)\tilde{v}+a\tilde{v}_x&=F_x-(b_{xx})_{\leq m}v^m
    \end{aligned}     
    \end{equation}
    where \begin{align*}
     \|F_x-(b_{xx})_{\leq m}v^m\|_{L_t^\infty L_x^2}&\leq \|F_x\|_{L_t^\infty L_x^2}+\|(b_{xx})_{\leq m}v^m\|_{L_t^\infty L_x^2}\\
     &\leq \|F_x\|_{L_t^\infty L_x^2}+\|(b_{xx})_{\leq m}\|_{L_{t, x}^\infty}\|v^m\|_{L_t^\infty L_x^2}<\infty.
     \end{align*}
    By applying Lemma $\ref{L_x^2 well-posedness}$ once again, we obtain that \eqref{mollified} admits a unique solution $\tilde{v^m}\in L_t^\infty L_x^2$ so that $v^m_x = \tilde{v^m}$ and $v^m\in L_t^\infty H_x^1$.
    
    Using \eqref{timeD}, we find
    \[
    \frac{d}{dt}\int_{\mathbb{R}}(v^m)^2 \, dx \lesssim \|v^m\|_{L_x^2}\|F\|_{L_x^2}+(\|a_x\|_{L_x^\infty}+2\|b_x\|_{L_x^\infty})\|v^m\|_{L_x^2}^2
    \]
    and 
    \begin{align*}
        \frac{d}{dt}\int_{\mathbb{R}}(v^m_x)^2 \, dx &\lesssim \|v^m_x\|_{L_x^2}\|F_x-(b_{xx})_{\leq m}v^m\|_{L_x^2}+\|a_x+2(b_{\leq m})_x\|_{L_x^\infty}\|v^m_x\|_{L_x^2}^2\\
        &\lesssim \|v^m_x\|_{L_x^2}\|F_x\|_{L_x^2}+\|b_{xx}\|_{L_x^2}\|v^m\|_{L_x^\infty}\|v^m_x\|_{L_x^2}+(\|a_x\|_{L_x^\infty}+\|b_x\|_{L_x^\infty})\|v^m_x\|_{L_x^2}^2.
    \end{align*}
    Denoting
    \[
    E^m(t)=\int_{\mathbb{R}}(v^m(t))^2\, dx+\int_{\mathbb{R}}(v^m_x(t))^2\, dx,
    \]
    we have
    \begin{align*}
       \frac{d}{dt}E^m(t)&\lesssim (E^m(t))^{1/2}\|F(t)\|_{H_x^1}+(\|a_x(t)\|_{L_x^\infty}+\|b_x(t)\|_{L_x^\infty}+\|b_{xx}(t)\|_{L_x^2})E^m(t).
    \end{align*}
    From Gr\"onwall's lemma, we infer that
    \begin{align*}
        E^m(t)&\leq e^{\frac{C}{2}\int_0^T\|a_x(s)\|_{L_x^\infty}+\|b_x(s)\|_{L_x^\infty\cap\dot{H}_x^1}\, ds} \cdot \\
        &\quad \left(\|v_0\|_{H_x^1}+\int_0^Te^{-\frac{C}{2}\int_0^s\|a_x(\tau)\|_{L_x^\infty}+\|b_x(\tau)\|_{L_x^\infty\cap\dot{H}_x^1}\, d\tau}\|F(s)\|_{H_x^1}\, ds\right),
    \end{align*}
   uniformly in $m$ and $t\in[0,T]$.
    
    \
    
 Let $l\geq 0$ and $z=v^{m+l} - v^m\in L_t^\infty L_x^2$. We see that $z$ solves
 \begin{equation*}\begin{aligned}
     z_t+z_{xxx}+az_x+(b_x)_{\leq m+l}z&=-(b_x)_{m<\cdot \leq m+l}v^m =: H.
 \end{aligned}
 \end{equation*}
 Let $\displaystyle e := \sup_{m\geq 1}\sup_{t\in [0,T]}E^{m}(t)<\infty$. We estimate the source term:
 \begin{align*}
     \|H\|_{L_t^\infty L_x^2}&\lesssim \|(b_x)_{m<\cdot \leq m+l}\|_{L_t^\infty L_x^2}\|v^m\|_{L_{t, x}^\infty} \lesssim 2^{-m}\|(b_{m<\cdot \leq m+l})_{xx}\|_{L_t^\infty L_x^2}\|v^m\|_{L_t^\infty H_x^1} \\
     &\lesssim 2^{-m}\|b_{xx}\|_{L_t^\infty L_x^2}e^{1/2}.
 \end{align*}
 By applying the energy estimate provided by Lemma $\ref{L_x^2 well-posedness}$ with Gr\"onwall, we obtain 
 \begin{align*}
     \|z(t)\|_{L_x^2}&\leq e^{\frac{C}{2}\int_0^t \|a_x(s)\|_{L_x^\infty}+2\|b_x(s)\|_{L_x^\infty} \, ds}\left(\frac{C}{2}\int_0^te^{-\frac{C}{2}\int_0^s \|a_x(\tau)\|_{L_x^\infty}+2\|b_x(\tau)\|_{L_x^\infty}\, d\tau}\|H(s)\|_{L_x^2}\, ds \right)\\
     &\lesssim T2^{-m}e^{1/2}\|b_{xx}\|_{L_t^\infty L_x^2}.
 \end{align*}
 
 Thus, $\displaystyle v^m$ is a Cauchy sequence in $L_t^\infty L_x^2$, which means that it converges to a solution $v$. As $\displaystyle v^{m}$ is bounded in $L_t^\infty H_x^1$, Lemma $\ref{Regularity upgrade for limits}$ implies $v\in L_t^\infty H_x^1 $. The energy estimates of Lemma $\ref{L_x^2 well-posedness}$ also prove uniqueness.
 A similar computation to the one carried out for $v^m$ provides the desired energy estimate. In particular, if $u$ is a solution of \eqref{HS}, then $u_x$ is a solution of \eqref{linear} with $a = u$, $b = -u_x/2$, and $F = 0$, so that the desired estimate follows.
   \end{proof}
   
   Using this, we establish persistence of regularity for \eqref{HS}:
   
   \begin{lemma}\label{Persistence of regularity}
   Let $T>0$, and $u\in C([0, T], X)$ a solution for the dispersive Hunter-Saxton equation  \eqref{HS}. If $u(0)\in X\cap \dot{H}_x^{n + 1}(\mathbb{R})$, then $\displaystyle u\in L_t^\infty([0, T], X\cap \dot{H}_x^{n + 1})$. Furthermore, in the case $n = 2$, we have the energy estimate    
   \begin{equation*}
     \frac{d}{dt} \|u_{xx}(t)\|_{H^1_x}^2 \lesssim    \|u_x(t)\|_{L_x^\infty}\|u_{xx}(t)\|_{H_x^1}^2.
    \end{equation*}
   \end{lemma}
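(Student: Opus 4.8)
The plan is to prove persistence by induction on the number of derivatives, at each stage recognizing a derivative of $u$ as the solution of a linear equation to which Lemmas~\ref{L_x^2 well-posedness} and~\ref{H_x^1 well-posedness} apply. Writing $uu_x=\tfrac12(u^2)_x$ and differentiating \eqref{HS} $m$ times, a Leibniz expansion shows that $\phi_m:=\partial_x^m u$ solves
\begin{equation*}
(\phi_m)_t+u(\phi_m)_x+m\,u_x\phi_m+(\phi_m)_{xxx}=F_m,
\end{equation*}
an instance of \eqref{linear} with $a=u$, $b=mu$, and source $F_m$ built from the balanced products $\partial_x^j u\,\partial_x^{m+1-j}u$ (together with their analogues coming from $\partial_x^{m-1}(u_x^2)$) in which both factors carry between $2$ and $m-1$ derivatives. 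The transport term $u(\phi_m)_x$ and the potential term $m\,u_x\phi_m$ collect exactly the extreme terms of the expansion, the coefficient $m$ already accounting for the contribution of the source $\tfrac12\partial_x^{m-1}(u_x^2)$; in particular $F_2=0$.

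The inductive step then goes from $u\in L_t^\infty(X\cap\dot H_x^m)$ to $u\in L_t^\infty(X\cap\dot H_x^{m+1})$ for $2\le m\le n$, the base case being contained in $u\in C([0,T],X)$. I would first verify the hypotheses of Lemma~\ref{H_x^1 well-posedness} for the $\phi_m$-equation: one has $a=u,\ b=mu\in L_t^\infty\dot W_x^{1,\infty}$ and $b=mu\in L_t^\infty\dot H_x^2$ since $u\in L_t^\infty X$; the data satisfies $\phi_m(0)=\partial_x^m u_0\in H^1$, as $u_0\in X\cap\dot H_x^{n+1}$ interpolates into $\dot H_x^m\cap\dot H_x^{m+1}$; and $F_m\in L_t^\infty H_x^1\subset L_t^1 H_x^1$. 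This last inclusion is exactly where the induction hypothesis enters: every summand of $\partial_x F_m$ is a product of two derivatives of $u$ whose orders sum to $m+2$ and are each at most $m$, so putting the higher-order factor in $L^2$ (bounded by $\dot H_x^m$) and the lower-order factor in $L^\infty$ (bounded via $H^1\hookrightarrow L^\infty$, again using $\dot H_x^m$) controls it in $L^2$, and $F_m$ itself is estimated the same way. Lemma~\ref{H_x^1 well-posedness} then yields a solution $\bar\phi\in L_t^\infty H_x^1$ of the $\phi_m$-problem. To transfer this to $u$, I would observe that both $\bar\phi$ and $\partial_x^m u$ are $L_t^\infty L_x^2$ solutions of the same linear Cauchy problem \eqref{linear}, so the $L^2$ uniqueness of Lemma~\ref{L_x^2 well-posedness} forces $\bar\phi=\partial_x^m u$; hence $\partial_x^m u\in L_t^\infty H_x^1$, i.e.\ $u\in L_t^\infty\dot H_x^{m+1}$, which closes the induction.

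For the energy estimate when $n=2$, the generic bound of Lemma~\ref{H_x^1 well-posedness} applied to $w:=u_{xx}$ is not sharp, as it generates a term $\|b_{xx}\|_{L^2}\|w\|_{L^\infty}=2\|u_{xx}\|_{L^2}\|u_{xx}\|_{L^\infty}$ not controlled by $\|u_x\|_{L^\infty}$. Instead I would estimate $w$ and $w_x=u_{xxx}$ directly from $w_t+uw_x+2u_xw+w_{xxx}=0$. Integrating the transport and dispersive terms by parts gives $\tfrac{d}{dt}\|w\|_{L^2}^2=-3\int u_x w^2\lesssim\|u_x\|_{L^\infty}\|w\|_{L^2}^2$. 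Differentiating the equation, the term $\partial_x(2u_xw)$ produces $2u_{xx}w=2w^2$, and the crucial point is that this contributes $\int w_x\,w^2=\tfrac13\int\partial_x(w^3)=0$ to the $\dot H^1$ energy; thus the dangerous term drops out and $\tfrac{d}{dt}\|w_x\|_{L^2}^2=-5\int u_x w_x^2\lesssim\|u_x\|_{L^\infty}\|w_x\|_{L^2}^2$. Summing yields $\tfrac{d}{dt}\|u_{xx}\|_{H_x^1}^2\lesssim\|u_x\|_{L^\infty}\|u_{xx}\|_{H_x^1}^2$.

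I expect the main difficulty to be one of rigor rather than of the formal identities. The uniqueness identification in the inductive step must be applied within the weak $L_t^\infty L_x^2$ class of Lemma~\ref{L_x^2 well-posedness}, after checking that $\partial_x^m u$ is genuinely a distributional solution of \eqref{linear}. More delicate is the direct energy computation for $n=2$, which integrates by parts quantities involving up to five derivatives of $u$ and so is not licensed at the regularity $\dot H^3$ just obtained. I would circumvent this by approximating $u_0$ by smooth data, using the persistence already established to produce smooth solutions on $[0,T]$, running the energy identities where they are legitimate, and passing to the limit via the continuous dependence of Theorem~\ref{Local well-posedness} together with the regularity upgrade of Lemma~\ref{Regularity upgrade for limits}.
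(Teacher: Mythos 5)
Your regularity-by-induction argument is essentially the paper's: write the equation satisfied by $\partial_x^m u$, produce an $L_t^\infty H_x^1$ solution via Lemma~\ref{H_x^1 well-posedness}, and identify it with $\partial_x^m u$ through the $L^2$ uniqueness of Lemma~\ref{L_x^2 well-posedness}; your Leibniz bookkeeping is in fact the more careful one (the coefficient $m\,u_x\phi_m$ with the source contribution absorbed, and $F_2=0$, are correct, whereas the paper's displayed equation for $n>2$ keeps the coefficient $2u_xv$, a slip). Where you genuinely diverge is the $n=2$ energy estimate, and your instinct there is not merely stylistic but necessary. The paper derives this estimate by citing the generic bound of Lemma~\ref{H_x^1 well-posedness}, but that bound produces the term $\|b_{xx}\|_{L^2_x}\|v\|_{L^\infty_x}\|v\|_{\dot H^1_x}=2\|u_{xx}\|_{L^2_x}\|u_{xx}\|_{L^\infty_x}\|u_{xxx}\|_{L^2_x}$, which is \emph{not} dominated by $\|u_x\|_{L^\infty_x}\|u_{xx}\|_{H^1_x}^2$ as a functional inequality: superposing a unit-amplitude frequency-$N$ wave on an interval of length $1$ with a unit-amplitude frequency-$1$ wave on an interval of length $N^4$ makes the former of size $N^5$ and the latter of size $N^4$. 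Your direct computation, in which the dangerous term $2u_{xx}^2$ in the differentiated equation pairs to $\int u_{xxx}\,u_{xx}^2\,dx=\tfrac13\int\partial_x(u_{xx}^3)\,dx=0$, is exactly what makes the stated estimate true (your constants $-3$ and $-5$ check out), so on this point your proof repairs a step the paper glosses over.

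The one genuine flaw is the circularity in your final rigor paragraph. The continuous dependence asserted in Theorem~\ref{Local well-posedness} is proved in the paper \emph{using} Lemma~\ref{Persistence of regularity} — both the $\dot H^3$ regularity and precisely this energy estimate are applied there to the truncated solutions $u^h$ — so it cannot be invoked to justify the lemma. Relatedly, "using the persistence already established to produce smooth solutions" conflates persistence (which upgrades a given solution) with existence, which comes from the iteration scheme and only yields a lifespan $T(R)$, possibly shorter than the given $T$. Both defects are repairable without circularity: the existence, uniqueness, and difference estimate \eqref{diffest} are proved independently of this lemma, so you may approximate the data by $P_{\leq k}u_0$, obtain solutions from the iteration on intervals of length $T(R)$ with $R\sim\sup_t\|u(t)\|_X$ (these are smooth by the regularity part of the lemma, so your integrations by parts are licensed), run Gr\"onwall, and pass to the limit using \eqref{diffest} together with Lemma~\ref{Regularity upgrade for limits}, iterating over a finite subdivision of $[0,T]$. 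Alternatively, note that in this paper the $n=2$ estimate is only ever applied to solutions already known to be smooth (the $u^h$ with frequency-truncated data), for which your formal computation needs no limiting argument at all.
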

   \begin{proof}
   Observe that $u_{xx}$ formally satisfies
   \begin{equation*}\begin{aligned}
      v_t+uv_{x}+2u_xv+v_{xxx}=0.
   \end{aligned}
   \end{equation*}
   As $u\in L_t^\infty X$, by applying Lemma $\ref{H_x^1 well-posedness}$, we infer that the problem admits a unique solution $v\in L_t^\infty H_x^1$. In particular, $v$ solves the problem in the sense of distributions, so that $v=u_{xx}$ and $\displaystyle u\in L_t^\infty (X\cap \dot{H}_x^3)$, along with the energy estimate, as desired.
   
   For $n > 2$, observe that $\D_x^n u$ formally satisfies
      \begin{equation}\begin{aligned}
      v_t+uv_{x}+2u_xv+v_{xxx}=P(u_{xx}, ..., \D_x^{n - 2} u),
   \end{aligned}
   \end{equation}
   where $P$ is a quadratic polynomial. The result follows by induction and Lemma $\ref{H_x^1 well-posedness}$.
   \end{proof}

   We now establish the following $L^\infty$ estimate that will be used in the proof of several other results, including the iteration for the proof of Theorem \ref{Local well-posedness}:
   
   \begin{lemma}\label{inftyest}
   Let $T>0$, $a\in L_t^\infty([0, T], W^{1,\infty})$, and $w \in L_t^\infty([0, T], L^\infty)$ satisfy
   \begin{equation}\begin{aligned}
        w_t+aw_x+w_{xxx}&= f.
    \end{aligned}
    \end{equation}
    Then $w$ satisfies
    \[
    \frac{d}{dt} \|w_{\leq 0}\|_{L^\infty} \lesssim 
    \| w_{\leq 0}\|_{L^\infty} + \| f_{\leq 0}\|_{L^\infty} + \|a\|_{W^{1,\infty}} \| w\|_{L^\infty} 
    \].
    
   \end{lemma}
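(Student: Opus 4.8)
The plan is to localize the equation to low frequencies and run an $L^\infty$ estimate directly on $w_{\le 0}$. Applying the projection $(\cdot)_{\le 0}$, which commutes with both $\partial_t$ and $\partial_x$, to the equation yields
\[
\partial_t w_{\le 0} = -(w_{xxx})_{\le 0} - (a w_x)_{\le 0} + f_{\le 0},
\]
so it suffices to estimate each term on the right-hand side in $L^\infty$ and then convert the resulting bound on $\partial_t w_{\le 0}$ into one on $\tfrac{d}{dt}\|w_{\le 0}\|_{L^\infty}$.

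For the dispersive term I would use Bernstein's inequality for functions localized to frequencies $\lesssim 1$. Inserting a fattened projector $\tilde P_{\le 0}$ with symbol equal to $1$ on the support of the $(\cdot)_{\le 0}$ multiplier, we have $\partial_x^3 w_{\le 0} = (\partial_x^3 \tilde P_{\le 0}) w_{\le 0}$, and $\partial_x^3 \tilde P_{\le 0}$ is convolution against a fixed Schwartz kernel, hence bounded on $L^\infty$. This gives $\|(w_{xxx})_{\le 0}\|_{L^\infty} = \|\partial_x^3 w_{\le 0}\|_{L^\infty} \lesssim \|w_{\le 0}\|_{L^\infty}$.

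The delicate term is $(a w_x)_{\le 0}$, since $w_x$ itself is not controlled in $L^\infty$; this is the main obstacle. The key is to move the derivative off $w$ before projecting: writing $a w_x = \partial_x(aw) - a_x w$, the projection becomes
\[
(a w_x)_{\le 0} = \partial_x (aw)_{\le 0} - (a_x w)_{\le 0}.
\]
Since $\partial_x(\cdot)_{\le 0}$ and $(\cdot)_{\le 0}$ are convolutions against $L^1$ kernels and hence bounded on $L^\infty$, I would obtain
\[
\|(a w_x)_{\le 0}\|_{L^\infty} \lesssim \|aw\|_{L^\infty} + \|a_x w\|_{L^\infty} \lesssim \|a\|_{W^{1,\infty}} \|w\|_{L^\infty}.
\]
Collecting the three bounds yields $\|\partial_t w_{\le 0}\|_{L^\infty} \lesssim \|w_{\le 0}\|_{L^\infty} + \|f_{\le 0}\|_{L^\infty} + \|a\|_{W^{1,\infty}}\|w\|_{L^\infty}$.

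Finally, to pass to $\tfrac{d}{dt}\|w_{\le 0}\|_{L^\infty}$, I would invoke the standard fact that for $g$ Lipschitz in time with values in $L^\infty$ one has $\tfrac{d}{dt}\|g\|_{L^\infty} \le \|\partial_t g\|_{L^\infty}$ for a.e.\ $t$, which follows from the reverse triangle inequality $\big|\|g(t+h)\|_{L^\infty} - \|g(t)\|_{L^\infty}\big| \le \|g(t+h)-g(t)\|_{L^\infty}$ after dividing by $h>0$ and letting $h \to 0^+$. Under the hypotheses $a \in L_t^\infty W^{1,\infty}$ and $w \in L_t^\infty L^\infty$ (with $f_{\le 0} \in L^\infty$), the right-hand side above is finite, so $w_{\le 0}$ is Lipschitz into $L^\infty$ and the inequality applies, yielding the claim.
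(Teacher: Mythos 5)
Your proposal is correct and follows essentially the same route as the paper: project the equation with $P_{\leq 0}$, rewrite $aw_x = (aw)_x - a_x w$ so the derivative can be absorbed by the bounded operator $\partial_x P_{\leq 0}$, and bound $(w_{\leq 0})_{xxx}$ by $\|w_{\leq 0}\|_{L^\infty}$ via the low-frequency localization. Your final step converting the pointwise bound on $\partial_t w_{\leq 0}$ into a bound on $\tfrac{d}{dt}\|w_{\leq 0}\|_{L^\infty}$ is left implicit in the paper, but is the same standard argument.
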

   \begin{proof}
   By applying the frequency projection $P_{\leq 0}$, we obtain
\begin{align*}
    (w_{\leq 0})_t+(aw_x)_{\leq 0}+(w_{\leq 0})_{xxx}=f_{\leq 0}
    \end{align*}
    and estimate 
    \begin{equation*}
        \begin{aligned}
        \|(aw_x + w_{xxx})_{\leq 0}\|_{L^\infty} &\lesssim \| ((aw)_x - (a_xw))_{\leq 0}\|_{L^\infty}  + \|(w_{\leq 0})_{xxx}\|_{L^\infty} \\
           &\lesssim (\|a\|_{L^\infty} + \|a_x\|_{L^\infty})\|w\|_{L^\infty} + \|w_{\leq 0}\|_{L^\infty}.
        \end{aligned}
    \end{equation*}
   \end{proof}
   
   Lastly, we observe a technical result which will be used in the proof of Theorem \ref{Local well-posedness} to show that the solution of \eqref{HS} has the desired regularity:
   
   \begin{lemma}\label{Regularity upgrade for limits}
   Let $T>0$ and $\{v^n\}_{n\geq 0}\in L_t^\infty([0, T], H_x^1)$ be a bounded sequence such that
   \[
   v^n \rightarrow v \in L_t^\infty([0, T], L_x^2).
   \]
   Then $v\in L_t^\infty([0, T], H_x^1)$.
   \end{lemma}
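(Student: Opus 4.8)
The plan is to prove that a weak limit inherits the $\dot H_x^1$ bound from a bounded sequence, which is a soft functional-analytic fact rather than a PDE estimate. First I would record the hypothesis quantitatively: since $\{v^n\}$ is bounded in $L_t^\infty H_x^1$, there exists a constant $M$ with $\|v^n\|_{L_t^\infty H_x^1} \le M$ for all $n$. The goal is to transfer this bound to $v$, knowing only that $v^n \to v$ in $L_t^\infty L_x^2$, which in particular gives convergence in the sense of distributions.

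The cleanest route is via weak compactness. The space $L_t^\infty([0,T], H_x^1)$ is the dual of the separable Banach space $L_t^1([0,T], H_x^{-1})$, so by the Banach--Alaoglu theorem the bounded sequence $\{v^n\}$ has a subsequence converging weak-$*$ to some limit $\tilde v \in L_t^\infty H_x^1$, with $\|\tilde v\|_{L_t^\infty H_x^1} \le \liminf_n \|v^n\|_{L_t^\infty H_x^1} \le M$ by weak-$*$ lower semicontinuity of the norm. It then remains to identify $\tilde v$ with $v$. This follows because weak-$*$ convergence in $L_t^\infty H_x^1$ forces convergence in the distributional sense (testing against smooth compactly supported functions), and the same subsequence already converges to $v$ in $L_t^\infty L_x^2$, hence also distributionally. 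Since distributional limits are unique, $\tilde v = v$, and therefore $v \in L_t^\infty H_x^1$ with the inherited bound.

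Concretely, I would pair $v^n$ against an arbitrary test function $\phi \in C_c^\infty((0,T)\times\mathbb{R})$ and its derivative $\partial_x\phi$: the $L^2$ convergence gives $\int v^n \phi \to \int v \phi$ and $\int v^n_x \phi = -\int v^n \partial_x\phi \to -\int v\, \partial_x\phi$. The uniform $\dot H_x^1$ bound means the distributions $\partial_x v^n$ are represented by $L^2$ functions of uniformly bounded norm, and the computation above shows $\partial_x v^n \to \partial_x v$ as distributions; a uniformly $L^2$-bounded sequence converging distributionally has an $L^2$ limit with norm controlled by $M$ (again by weak lower semicontinuity). Tracking the time variable, one obtains the essential bound $\|v(t)\|_{\dot H_x^1} \le M$ for a.e.\ $t$, which is exactly the claim $v \in L_t^\infty H_x^1$.

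I do not expect a serious obstacle here, as the statement is precisely the lower semicontinuity of norms under weak limits dressed up for a Bochner space. The only point requiring mild care is the duality structure of $L_t^\infty([0,T], H_x^1)$: one should confirm that $H_x^1$ (a separable Hilbert space, hence having the Radon--Nikodym property and being reflexive) makes $L_t^\infty H_x^1$ the dual of $L_t^1 H_x^{-1}$, so that Banach--Alaoglu applies and yields a weak-$*$ convergent subsequence. An alternative that sidesteps any duality subtlety is to work at fixed time along a measure-theoretic selection of good times, or simply to test against a countable dense family and use Fatou's lemma in the time variable; either way the mechanism is the same lower semicontinuity principle.
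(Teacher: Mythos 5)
Your proposal is correct, and it rests on the same mechanism as the paper's proof: weak compactness of a bounded sequence, identification of the weak limit with $v$ via uniqueness of distributional limits, and lower semicontinuity of the norm. The implementation differs, though. The paper argues at \emph{fixed time}: it selects times $t$ at which $v^n(t) \to v(t)$ in $L_x^2$ and $\|v^n(t)\|_{H_x^1} \le M$, uses reflexivity of the Hilbert space $H_x^1$ to extract a weakly convergent subsequence, identifies the weak limit with $v(t)$ distributionally, and concludes $\|v(t)\|_{H_x^1} \le M$ by testing against unit vectors; the uniformity in $t$ then gives the Bochner-space statement. Your primary route instead works globally in time, invoking the duality $L_t^\infty([0,T], H_x^1) = \bigl(L_t^1([0,T], H_x^{-1})\bigr)^*$ and Banach--Alaoglu in that dual pairing. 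This is legitimate, and you correctly flag the one point that needs care: this duality requires $H_x^1$ to have the Radon--Nikodym property, which holds since it is a separable reflexive (Hilbert) space. What each approach buys: yours settles the time-measurability of the limit automatically (the weak-$*$ limit is by construction an element of the Bochner space), at the cost of citing vector-valued duality theory; the paper's pointwise-in-time argument is more elementary, needing only Hilbert-space weak compactness, at the cost of the small measure-theoretic step of intersecting the full-measure sets of good times over $n$. Note also that your fallback suggestion --- ``work at fixed time along a measure-theoretic selection of good times'' --- is precisely the paper's proof.
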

   \begin{proof}
   Let $M>0$ be such that $\displaystyle \|v^n\|_{L_t^\infty H_x^1}\leq M$ for every $n\geq 0$. Fix $t\in [0,T]$ such that $v^n(t)$ converges to $v(t)$ in $L_x^2(\mathbb{R})$, and $\displaystyle \|v^n(t)\|_{H_x^1}\leq M$. It suffices to show that $\|v(t)\|_{H_x^1}\leq M$, independently of $t$. We omit $t$ in the notations below. 
   
   As $v^n$ is bounded in $H_x^1(\mathbb{R})$, which is a Hilbert space and hence reflexive, we infer that there exists a subsequence $\{v^{n_k}\}_{k\geq 0}$ that converges weakly to some $g\in H_x^1(\mathbb{R})$. In particular, $v^{n_k}$ converges to $g$ in the sense of distributions. On the other hand, $v^n$ converges to $v$ in $L_x^2(\mathbb{R})$ and in the sense of distributions, so $v=g \in H_x^1(\mathbb{R})$.
   
   Let $w\in H_x^1(\mathbb{R})$ with $\|w\|_{H_x^1}=1$, and observe that
   \begin{align*}
       \vert\langle v,w\rangle\vert=\lim_{\substack{k\rightarrow\infty}}\vert\langle v^{n_k},w\rangle\vert\leq \lim_{k \rightarrow \infty} \|v^{n_k}\|_{H_x^1}\leq M.
   \end{align*}
   We infer that $\|v\|_{H_x^1}\leq M$. This finishes the proof.
   
   \end{proof}
   
   \section{Local well-posedness}\label{s:lwp}
   
   In this section we prove Theorem~\ref{Local well-posedness}.
   
   \
   
   Let $C > 0$ be a large absolute constant which may vary from line to line, and let small $T > 0$ be fixed later. Let $\|u_0\|_X < R$. We inductively define a sequence $\{u^n\}_{n \geq 0}\in L_{t,x}^\infty([0,T]\times\mathbb{R})$. For $n=0$  we set  $u^0(t,x)=u_0(x)$. For $n > 0$, we will set $u^{n+1}\in L_{t,x}^\infty([0,T]\times\mathbb{R})$ as the unique solution of the Cauchy problem
   \begin{equation}\label{iterate}
   \left\{\begin{aligned}
       &u^{n+1}_t+u^{n+1}_{xxx}+u^nu_x^{n+1}=\frac{\partial_x^{-1}((u_x^n)^2)}{2}, \\
       &u^{n+1}(0)=u_0.
       \end{aligned}\right.
   \end{equation}

    \subsection{Existence and uniform bounds for \eqref{iterate}} Here we show existence and estimates for \eqref{iterate} in $L^\infty_t([0, T], X)$.
    
    \subsubsection{Existence for $u^{n+1}$ in $L_t^\infty(\dot{H}_x^1\cap\dot{H}_x^2)$}
     We first show that \eqref{iterate} has a solution $u^{n+1} \in L_t^\infty(\dot{H}_x^1\cap\dot{H}_x^2)$ with 
    \[
    E^{n+1}(t) := \int_{\mathbb{R}}(u^{n+1}_{xx}(t))^2+(u^{n+1}_{x}(t))^2 \, dx \leq K\|u_0\|_X^2 =: E,
    \]
    for $K > 0$ a large absolute constant. We assume by induction that this is true for $u^n$.
     
     We consider the Cauchy problem
    \begin{equation}\label{integrated-eqn}\left\{\begin{aligned}
     &v_t+v_{xxx}+(u^n)_xv+u^nv_x=\dfrac{(u_x^n)^2}{2},\\
     &v(0)=(u_0)_x.
 \end{aligned}\right.     \end{equation}
    By applying Lemma $\ref{H_x^1 well-posedness}$, we obtain that \eqref{integrated-eqn} admits a unique solution $v\in L_t^\infty H_x^1$. By Sobolev embedding, we obtain that $v\in L_x^\infty$, which implies that for almost every $t\in[0,T]$, $v(t)$ is locally integrable. Then we may define
    \[
    u^{n+1}(t,x)=u_0(0, 0) + \int_0^xv(t,y)\, dy.
    \]
    
 For the energy estimate, we apply the energy estimate of Lemma~\ref{H_x^1 well-posedness} to $(u^{n + 1})_x$ with the induction hypothesis to obtain that for every $t\in[0,T]$, with $T$ chosen appropriately small depending on $C$ and $\|u_0\|_X$,
 \begin{align*}
     (E^{n+1}(t))^{1/2}&\leq e^{\frac{C}{2}\int_0^t (E^n(s))^{1/2}\, ds}\left((E^{n+1}(0))^{1/2}+\frac{C}{2}\int_0^te^{-\frac{C}{2}\int_0^s (E^n(\tau))^{1/2}\, d\tau}E^n(s)\, ds \right)\\
     &\leq e^{\frac{CTE^{1/2}}{2}}\left(\frac{E^{1/2}}{2}+\frac{CTE}{2}\right)\lesssim E^{1/2}.
 \end{align*}
 In addition, the energy estimates for $u^{n+1}$ show that it is a unique solution, hence the iteration is well-defined.

 \subsubsection{$L_x^\infty$ control for $u^{n+1}$} Applying Lemma~\ref{inftyest} and choosing $T$ appropriately small depending on $E$, we have  
    \begin{equation*}
    \begin{aligned}
     \|(u^{n + 1})_{\leq 0}\|_{L_{t,x}^\infty} &\lesssim \|(u_0)_{\leq 0}\|_{L_x^\infty} + T(\|(\partial_x^{-1}(u^n_x)^2)_{\leq 0}\|_{L_{t,x}^\infty} + \|u^n\|_{L_t^\infty W^{1,\infty}}\|u^{n + 1}\|_{L_t^\infty \dot H_x^1}) \\
     &\lesssim \|(u_0)_{\leq 0}\|_{L_x^\infty} + T(\|u_x^n\|_{L_t^\infty L_x^2}^2 + \|u^n\|_{L_t^\infty X}\|u^{n + 1}\|_{L_t^\infty \dot H_x^1}) \\
     &\lesssim \frac12 E^{1/2} + TE \lesssim E^{1/2}.
    \end{aligned}
    \end{equation*}
    
 Combined with Sobolev embedding for the high frequencies,
 \begin{align*}
     \|(u^{n+1})_{> 0}\|_{L_{t, x}^\infty}\lesssim \|u^{n+1}\|_{\dot{H}_x^1\cap\dot{H}_x^2}\lesssim E^{1/2},
 \end{align*}
 we conclude that our iteration is well-defined with the uniform bound
 \begin{align*}
     \|u^{n + 1}\|_{L_t^\infty X}\leq E^{1/2}.
 \end{align*}
 \subsection{Convergence for $u^n$}
 We shall now prove that $u^n$ is a Cauchy sequence in $L_t^\infty (L_x^\infty\cap\dot{H}_x^1)$. Let $m\geq 0$ be an arbitrary integer and $z=u^{n+2}-u^{n+1}$. In this case, $z$ satisfies
 \begin{equation}\begin{aligned}
     z_t+u^{n+1}z_x+z_{xxx}&=\dfrac{\partial_x^{-1}((u_x^{n+1})^2-(u_x^n)^2)}{2}-(u^{n+1}-u^n)u^{n+1}_x =: H
 \end{aligned}
 \end{equation}
 and thus $z_x$ satisfies
 \begin{equation}\begin{aligned}
     (z_x)_t+u_x^{n+1}z_x+u^{n+1}z_{xx}+z_{xxxx}&=H_x.
 \end{aligned}
 \end{equation}
 We estimate the source term:
 \begin{equation*}
 \begin{aligned} 
    \|H_x\|_{L_t^\infty L_x^2}&\leq \|(u_x^{n+1})^2-(u_x^n)^2\|_{L_t^\infty L_x^2}+\|(u^{n+1}-u^n)u^{n+1}_{xx}\|_{L_t^\infty L_x^2}+\|(u^{n+1}-u^n)_xu^{n+1}_x\|_{L_t^\infty L_x^2}\\
    &\leq \|u_x^{n+1}-u_x^n\|_{L_t^\infty L_x^2}\|u_x^{n+1}+u_x^n\|_{L_{t, x}^\infty}+\|u^{n+1}-u^n\|_{L_{t, x}^\infty}\|u^{n+1}_{xx}\|_{L_t^\infty L_x^2} \\
    &\quad +\|u_x^{n+1}-u_x^n\|_{L_t^\infty L_x^2}\|u^{n+1}_{x}\|_{L_{t, x}^\infty}\\
    &\lesssim E^{1/2}\|u^{n+1}-u^n\|_{L_t^\infty(L_x^\infty\cap\dot{H}_x^1)},
 \end{aligned}
 \end{equation*}
 and 
  \begin{equation*}
 \begin{aligned} 
     \|H\|_{L_x^\infty}&\lesssim \|\partial_x^{-1}((u_x^{n+1})^2-(u_x^{n})^2)\|_{L_x^\infty}+\|(u^{n+1}-u^n)u^{n+1}_x\|_{L_x^\infty} \\
     &\lesssim \|u_x^{n+1}-u_x^n\|_{L_t^\infty L_x^2}\|u_x^{n+1}+u_x^n\|_{L_t^\infty L_x^2}+\|u^{n+1}-u^n\|_{L_{t, x}^\infty}\|u_x^{n+1}\|_{L_{t, x}^\infty}\\
     &\lesssim E^{1/2}\|u^{n+1}-u^n\|_{L_t^\infty(L_x^\infty\cap\dot{H}_x^1)}.
 \end{aligned}
 \end{equation*}
 
 By applying the energy estimate provided by Lemma $\ref{L_x^2 well-posedness}$ and choosing $T$ sufficiently small, we have
 \begin{align*}
    \|z_x(t)\|_{L_x^2}&\leq e^{\frac{C}{2}\int_0^t \|(u^{n+1}(s))_x\|_{L_x^\infty}\, ds}\left(\frac{C}{2}\int_0^te^{-\frac{C}{2}\int_0^s \|(u^{n+1}(\tau))_x\|_{L_x^\infty}\, d\tau}\|H_x(s)\|_{L_x^2}\, ds \right)\\
    &\lesssim e^{\frac{TCE}{2}} TE^{1/2}\|u^{n+1}-u^n\|_{L_t^\infty(L_x^\infty\cap\dot{H}_x^1)} \\
    &\ll \|u^{n+1}-u^n\|_{L_t^\infty(L_x^\infty\cap\dot{H}_x^1)}.
 \end{align*}

 For the $L^\infty$ estimates, applying Lemma~\ref{inftyest} and choosing $T$ appropriately small depending on $E$, we have  
    \begin{equation*}
 \begin{aligned}
     \|z_{\leq 0}\|_{L_{t,x}^\infty} &\lesssim T(\| H_{\leq 0}\|_{L_{t,x}^\infty} + \|u^{n + m}\|_{L_t^\infty W^{1, \infty}}\|z\|_{L_t^\infty \dot H_x^1}) \leq \frac14 \|u^{n+1}-u^n\|_{L_t^\infty(L_x^\infty\cap\dot{H}_x^1)}.
 \end{aligned}
 \end{equation*}
For the high frequencies, we use Sobolev embedding: 
  \begin{align*}
      \|z_{> 0}\|_{L_{t, x}^\infty}\lesssim \|z\|_{L_t^\infty \dot{H}_x^1} \ll \|u^{n+1}-u^n\|_{L_t^\infty(L_x^\infty\cap\dot{H}_x^1)}.
  \end{align*} 
  
  Putting everything together, and choosing $T$ sufficiently small (depending on $R$), we get
  \begin{align*}
      \|u^{n+2}-u^{n+1}\|_{L_t^\infty(L_x^\infty\cap \dot{H}_x^1)}\leq \half \|u^{n+1}-u^n\|_{L_t^\infty (L_x^\infty\cap\dot{H}_x^1)}.
  \end{align*}
 By iterating, we get 
 \begin{align*}
\|u^{n+2}-u^{n+1}\|_{L_t^\infty(L_x^\infty\cap\dot{H}_x^1))}\leq 2^{-n-1}\|u^{1}-u^{0}\|_{L_t^\infty(L_x^\infty\cap\dot{H}_x^1))}\lesssim 2^{-n}E^{\frac{1}{2}},
\end{align*} 
which shows that $u^n$ is a fundamental sequence in $L_t^\infty(\dot{H}_x^1\cap L_x^\infty)$ converging to an element $u\in L_t^\infty(\dot{H}_x^1\cap L_x^\infty)$. In particular, $u^n_x$ converges to $u_x$ in $L_t^\infty L_x^2$. As $u^n_x$ is bounded in $L_t^\infty H_x^1$ (because $u^n$ is bounded in $L_t^\infty X$), Lemma $\ref{Regularity upgrade for limits}$ implies that $u_x\in L_t^\infty H_x^1$. Therefore, $u\in L_t^\infty X$.

 \subsection{Uniqueness}
  Let $u$ and $v$ be two solutions to \eqref{HS} with initial data $u(0)=u_0$ and $v(0)=v_0$ such that $\|u_0\|_X<R$ and $\|v_0\|_X<R$. Let $w=u-v$. Recall that we have the bounds $\|u\|_{L_t^\infty X}, \|v\|_{L_t^\infty X} \leq E^{1/2}$.
  
 In this case, $w$ satisfies
 \begin{equation}\begin{aligned}
     w_t+uw_x+w_{xxx}&=-wv_x+\dfrac{\partial_x^{-1}(w_x(u_x+v_x))}{2} =: H
 \end{aligned}
 \end{equation}
 so that $w_x$ satisfies
 \begin{equation}\begin{aligned}
     (w_x)_t+uw_{xx} + \half(u_x+v_x)w_x +w_{xxxx}&=-wv_{xx}.
 \end{aligned}
 \end{equation}

 By applying the energy estimate provided by Lemma $\ref{L_x^2 well-posedness}$ and choosing $T$ sufficiently small, we get that
 \begin{equation}\label{L_x^2-diff}
 \begin{aligned}
     \|w_x\|_{L_t^\infty L_x^2}&\leq e^{\frac{C}{2}\int_0^t \|u_x(s)\|_{L_x^\infty} + \|v_x(s)\|_{L_x^\infty} \, ds}(\|(u_0)_x-(v_0)_x\|_{L_x^2}\\
     &\quad +\frac{C}{2}\int_0^te^{-\frac{C}{2}\int_0^s \|u_x(\tau)\|_{L_x^\infty} + \|v_x(\tau)\|_{L_x^\infty}\, d\tau}\|wv_{xx}\|_{L_x^2}\, ds )\\
     &\lesssim \|(u_0)_x-(v_0)_x\|_{L_x^2}+TE^{1/2}\|w\|_{L_{t,x}^\infty}.
 \end{aligned}
 \end{equation}
 For later use, we see that formally, we also have the energy estimate of Lemma~\ref{H_x^1 well-posedness},
 \begin{equation}\label{difference-est}
  \begin{aligned}
     \|w_x\|_{L_t^\infty H^1_x}&\leq e^{\frac{C}{2}\int_0^t \|u_x(s)\|_{L_x^\infty \cap \dot H^1_x} + \|v_x(s)\|_{L_x^\infty \cap \dot H^1_x} \, ds}(\|(u_0)_x-(v_0)_x\|_{H^1_x}\\
     &\quad +\frac{C}{2}\int_0^te^{-\frac{C}{2}\int_0^s \|u_x(\tau)\|_{L_x^\infty \cap \dot H^1_x} + \|v_x(\tau)\|_{L_x^\infty \cap \dot H^1_x}\, d\tau}\|wv_{xx}\|_{H^1_x}\, ds )\\
     &\lesssim \|(u_0)_x-(v_0)_x\|_{H^1_x}+T\|v_{xx}\|_{L_t^\infty(L_x^\infty\cap\dot{H}_x^1)}\|w\|_{L_t^\infty(L_x^\infty\cap\dot{H}_x^1)}.
 \end{aligned}
 \end{equation}

 For $L^\infty$ estimates, we estimate the source term:
    \begin{equation*}
 \begin{aligned} 
\|H\|_{L_x^\infty}=\left\|-wv_x+\frac{\partial_x^{-1}(w_x(u_x+v_x))}{2}\right\|_{L_x^\infty} &\lesssim \|w_x\|_{L_t^\infty L_x^2}\|u_x+v_x\|_{L_t^\infty L_x^2} + \|w\|_{L_{t, x}^\infty}\|v_x\|_{L_{t,x}^\infty} \\
 &\lesssim \|(w_x, v_x)\|_{L_t^\infty H^{\half+}}\|w\|_{L_t^\infty(L_x^\infty\cap\dot{H}_x^1)}.
  \end{aligned}
 \end{equation*}
 Then applying Lemma~\ref{inftyest} and choosing $T$ appropriately small, we have  
    \begin{equation}\label{ptwise-diff}
 \begin{aligned}
     \|w_{\leq 0}\|_{L_{t,x}^\infty} &\lesssim \|(w(0))_{\leq 0}\|_{L_x^\infty} + T(\| H_{\leq 0}\|_{L_{t,x}^\infty} + \|u\|_{L_t^\infty W^{1,\infty}}\|w\|_{L_t^\infty \dot H_x^1}) \\
     &\lesssim \|u_0-v_0\|_{L_x^\infty}+ T\|(w_x, v_x)\|_{L_t^\infty H^{\half+}}\|w\|_{L_t^\infty(L_x^\infty\cap\dot{H}_x^1)}.
 \end{aligned}
 \end{equation}
 Moreover, by Sobolev embedding,
 \[
  \|w_{> 0}\|_{L_{t, x}^\infty}\lesssim \|w_x\|_{L_t^\infty L_x^2}.
  \]
  
   By adding this inequality, along with equations \ref{L_x^2-diff} and \ref{ptwise-diff}, we get that
 \begin{align*}
    \|w\|_{L_t^\infty (L_x^\infty\cap \dot H_x^1)}&\lesssim \|(u_0)_x-(v_0)_x\|_{L_x^2}+\|u_0-v_0\|_{L_x^\infty}+TE^{1/2}\|w\|_{L_t^\infty(L_x^\infty\cap\dot{H}_x^1)}.
 \end{align*}
 Choosing $T$ sufficiently small, we find
 \begin{equation}\label{diffest}
     \|w\|_{L_t^\infty (L_x^\infty\cap \dot H_x^1)}\lesssim \|u_0-v_0\|_{L_x^\infty \cap \dot H_x^1}
 \end{equation}
 which establishes uniqueness.
 \subsection{Continuity with respect to the initial data}
 Consider a sequence of initial data
 \[
 u_{0j}\rightarrow u_0 \in X.
 \]
  Here, since $\|u_0\|_X<R$, we may assume that $\|u_{0j}\|_X<R$ for every $j$, and the existence part implies that $u_j$ and $u$ may be defined on a common time interval $[0,T]$, with uniform bounds in $j$. Furthermore, by the Lipschitz estimate from the proof of uniqueness,
 \[
 u_j\rightarrow u \in L_t^\infty (L_x^\infty\cap\dot{H}_x^1).
 \]
 By interpolation, it follows that 
 \[
 u_j \rightarrow u \in L_t^\infty([0, T],L_x^\infty\cap\dot{H}_x^1\cap\dot{H}_x^{2-\varepsilon}).\] 
 To obtain the endpoint, we take an approach similar to the one presented in \cite{LWN}. 
 
 \
 
 We define $u_{0j}^h=(u_{0j})_{\leq h}$ and $u_0^h=(u_0)_{\leq h}$, and may assume that
 \begin{align*}
     \|u^h_{0j}\|_{X} \lesssim \|u_0\|_{X},
 \end{align*}
 so that there exists $T=T(\|u_0\|_{X})>0$ and solutions $u^h$ and $u_j^h$ that belong to $L_t^\infty X$. Further, Lemma $\ref{Persistence of regularity}$ shows that $u^h$ and $u_j^h$ belong to $L_t^\infty (X\cap\dot{H}_x^3)$. 
  As 
  \[
  \displaystyle \int_0^T\|u^h_x(s)\|_{L_x^\infty}\, ds\lesssim T\|u^h\|_{L_t^\infty(\dot{H}_x^1\cap\dot{H}_x^2)},
  \]
   we have from the energy estimate of Lemma $\ref{Persistence of regularity}$ that
  \[
  \|u^h\|_{L_t^\infty(\dot{H}_x^1\cap\dot{H}_x^3)}\lesssim \|u^h_0\|_{\dot{H}_x^1\cap\dot{H}_x^3},
  \]
  and likewise for $u_j^h$.

 We consider $H_x^1$ sharp frequency envelopes for $(u_0)_x$ and $(u_{0j})_x$, denoted by $\{c_k\}_{k\in\mathbb{Z}}$ and $\{c^j_{k}\}_{k\in\mathbb{Z}}$. As $(u_{0j})_x \rightarrow (u_0)_x$ in $H_x^1$, we can assume that $c^j_{k}\rightarrow c_k$ in $l^2$. Moreover, as in \cite{LWN}, we can choose $c_k$ having the following properties:
 
 \begin{enumerate}
     \item[a)]Uniform bounds:
     \[     \|P_k(u_0^h)_x\|_{H_x^1}\lesssim c_k     \]
     \item[b)]High frequency bounds:
     \[\|(u_0^h)_x\|_{H_x^2}\lesssim 2^hc_h\]
     \item[c)]Difference bounds:
     \[\|u_0^{h+1}-u_0^h\|_{\dot H_x^1}\lesssim 2^{-h}c_h\]
     \item[d)]Limit as $h\rightarrow\infty$:
     \[ D_x u_0^h\rightarrow D_x u_0 \in H_x^1\]
 \end{enumerate}
 and likewise for $c^j_k$. 

\

We first establish estimates for $(u - u^h)_{> 0}$ and $(u_j - u_j^h)_{> 0}$ in $L_t^\infty X$. We treat the low frequencies separately because the frequency envelopes that we are using are $\dot{H}^1\cap\dot{H}^2$-based, and don't allow us to control the $L^\infty$-component of the norm of $X$ at low frequencies. By applying the Lipschitz estimate from the proof of uniqueness, we can see that
 \begin{align*}
    \|u^{h+1}-u^h\|_{L_t^\infty(\dot{H}_x^1\cap L_x^\infty)}\lesssim \|u^{h+1}_0-u^h_0\|_{\dot{H}_x^1\cap L_x^\infty} \lesssim \|u_0^{h+1}-u_0^h\|_{\dot H^1}\lesssim 2^{-h}c_h.
 \end{align*}
 Taking the high frequencies and interpolating with the estimate
 \[
 \|u^h_{>0}\|_{L_t^\infty(X\cap\dot{H}_x^3)}\lesssim \|u^h\|_{L_t^\infty(\dot{H}_x^1\cap\dot{H}_x^3)}\lesssim \|u^h_0\|_{\dot{H}_x^1\cap\dot{H}_x^3} \lesssim 2^h c_h,
 \]
  we get that 
 \begin{align*}
     \|u^{h+1}_{>0}-u^h_{>0}\|_{L_t^\infty X}&\lesssim c_h.
 \end{align*}
 The analogous analysis and estimates hold for $u_j^h$. Moreover, as in \cite{LWN}, we get that 
 \begin{align*}
 \displaystyle \|u_{>0}-u^h_{>0}\|_{L_t^\infty X}\lesssim c_{\geq h}=\left(\sum_{\substack{k\geq h}}c_k^2\right)^{1/2}, \quad \displaystyle \|(u_j)_{>0}-(u_j^h)_{>0}\|_{L_t^\infty X}\lesssim c^j_{\geq h}=\left(\sum_{\substack{k\geq h}}(c^j_k)^2\right)^{1/2}.
 \end{align*}
 
\

 Next, we show that for fixed $h$, $\displaystyle \lim_{\substack{j\rightarrow\infty}}u_j^h=u^h$ in $L_t^\infty X([0,T]\times\mathbb{R})$. Let us write $w=u^h-u_j^h$, which by \eqref{difference-est} satisfies
  \begin{equation*}
  \begin{aligned}
     \|w_x\|_{L_t^\infty H^1_x} &\lesssim \|w_x(0)\|_{H^1_x}+T\|(u_j^h)_{xx}\|_{L_t^\infty(L_x^\infty\cap\dot{H}_x^1)}\|w\|_{L_t^\infty(L_x^\infty\cap\dot{H}_x^1)}.
 \end{aligned}
 \end{equation*}

    As $h$ is fixed, the previous discussion ensures that $\|(u_j^h)_{xx}\|_{L_t^\infty(L_x^\infty\cap\dot{H}_x^1)}$ is uniformly bounded with respect to $j$. Using as well \eqref{diffest}, we conclude that
    \begin{align*}
         \|w\|_{L_t^\infty X} &\lesssim_h \|u_0-u_{0j}\|_{X}
    \end{align*}
    as desired.
     
     \
     
     To complete the argument, we have
     \begin{align*}
        \|u_{>0}-(u_j)_{>0}\|_{L_t^\infty X}&\lesssim \|u^h -u_j^h\|_{L_t^\infty X}+ \|u_{>0}-u^h_{>0}\|_{L_t^\infty X}+\|(u_j)_{>0}-(u_j^h)_{>0}\|_{L_t^\infty X}\\
        &\lesssim \|u^h-u_j^h\|_{L_t^\infty X}+c_{\geq h}+c^j_{\geq h}
     \end{align*}
     so that fixing $h$,
     \begin{align*}
        \limsup_{\substack{j\rightarrow\infty}}\|u_{>0}-(u_j)_{>0}\|_{L_t^\infty X}\lesssim c_{\geq h} + c^j_{\leq h}
     \end{align*}
     Then letting $h$ tend to $\infty$, we get that 
     \begin{align*}
         \lim_{\substack{j\rightarrow\infty}}\|u_{>0}-(u_j)_{>0}\|_{L_t^\infty X}=0.
     \end{align*}
     
  For the low frequencies, we directly estimate
  \begin{align*}
\|u_{\leq 0}-(u_j)_{\leq 0}\|_{L_t^\infty X}\lesssim \|u_{\leq 0}-(u_j)_{\leq 0}\|_{L_t^\infty (\dot{H}_x^1\cap L_x^\infty)}\lesssim \|u-u_j\|_{L_t^\infty(\dot{H}_x^1\cap L_x^\infty)}\lesssim \|u_0-u_{0j}\|_{\dot{H}_x^1\cap L_x^\infty}
\end{align*}
As $u_{j0} \rightarrow u_j$ in $X$, it follows that
\begin{align*}
         \lim_{\substack{j\rightarrow\infty}}\|u_{\leq 0}-(u_j)_{\leq 0}\|_{L_t^\infty X}=0.
     \end{align*}
     
     Combining the low and high frequencies, we obtain $u_j \rightarrow u$ in $L_t^\infty X$.

     \subsection{Continuity in time} 
      Let $h>0$ be an arbitrary parameter, and $u^h$ solve \eqref{HS} with initial data $(u_0)_{\leq h}$. In particular,
\begin{equation}\begin{aligned}
u^h_t&=\frac{\partial_x^{-1}((u^h_x)^2)}{2}-u^hu^h_x-u^h_{xxx}.
\end{aligned}
\end{equation}
From Lemma \ref{Persistence of regularity}, we know that $u^h\in L_t^\infty (X\cap\dot{H}_x^5)$, so that the right hand side belongs to $L_t^\infty X$. Thus, $u^h\in C_t^0X$. From the previous section, we know that $u^h$ converges to $u$ in $L_t^\infty X$, hence in $C_t^0X$. This concludes the proof of Theorem~\ref{Local well-posedness}.

\section{Global well-posedness}\label{s:gwp}
 
 In this section, we prove Theorem~\ref{Global well-posedness}. Recall that 
 the dispersive Hunter-Saxton \eqref{HS} has the conserved quantities (see \cite{CI})
\begin{align*}
    E_1(t)&=\int_{\mathbb{R}}u_x(t)^2 \, dx\\
    E_2(t)&=\int_{\mathbb{R}}u_{xx}(t)^2-u(t)u_x(t)^2 \, dx.
\end{align*}

    Throughout the proof, $C>0$ shall denote a universal large constant. Consider a solution $u$ of \eqref{HS} on $[0, T)$ where $T$ is finite. We shall determine a uniform bound for $\|u(t)\|_X$.
    
We begin with the $L^\infty$ estimate. The high frequencies can be controlled by the $\dot{H}^1$ norm, which is conserved via $E_1$, but the low frequencies need to be treated separately as follows. Projecting \eqref{HS} onto frequencies less than or equal to $1$, we consider
\begin{align*}
(u_{\leq 0})_t+(uu_x)_{\leq 0}+(u_{\leq 0})_{xxx}=\frac{(\partial_x^{-1}(u_x^2))_{\leq 0}}{2}.
\end{align*}

For the transport term, write
\begin{equation*}
\begin{aligned}
(uu_x)_{\leq 0} - u_{\leq 0} (u_{\leq 0})_x &= (u_{> 0} u_x)_{\leq 0} + [ P_{\leq 0}, u_{\leq 0}] u_x \\
&= (u_{> 0} u_x)_{\leq 0} + [ P_{\leq 0}, P_0 u] u_x  + [ P_{\leq 0}, u_{< 0}] P_{0} u_x 
\end{aligned}
\end{equation*}
and estimate
\begin{equation*}
\begin{aligned}
\|(u_{> 0} u_x)_{\leq 0}\|_{L_x^\infty}&\lesssim \|u_{> 0} u_x\|_{L_x^2} \lesssim \|u_x\|_{L_x^2}.
\end{aligned}
\end{equation*}
The same estimate holds for the first commutator directly, without using the commutator structure. For the second commutator,
\[
\|[ P_{\leq 0}, u_{< 0}] P_{0} u_x  \|_{L^\infty} \lesssim  \|[ P_{\leq 0}, u_{< 0}] P_{0} u_x  \|_{L^2} \lesssim \|\D_x u_{< 0}\|_{L^\infty}\|P_0 u\|_{L^2} \lesssim \|u_x\|_{L^2}^2.
\]

Besides this, we may estimate the dispersive and source terms by
\begin{align*}
\|(u_{\leq 0})_{xxx}\|_{L_x^\infty}\lesssim \|u_x\|_{L_x^2},\quad \|(\partial_x^{-1}(u_x^2))_{\leq 0}\|_{L_x^\infty}\lesssim \|u_{x}\|_{L_x^2}^2.
\end{align*}
Therefore, denoting
\[
F = \frac{(\partial_x^{-1}(u_x^2))_{\leq 0}}{2}-((uu_x)_{\leq 0} - u_{\leq 0} (u_{\leq 0})_x)-(u_{\leq 0})_{xxx},
\]
we have
\begin{align*}
\|F\|_{L_x^\infty}&\lesssim \|u_{x}\|_{L_x^2}^2+\|u_x\|_{L_x^2}=E_1 + E_1^{1/2}.
\end{align*}

As $u\in C_t^0X([0,T)\times\mathbb{R})$, we see that $u$ is continuous with respect to $t$ and $x$, and Lipschitz with respect to $x$, uniformly in $t$.
 As in \cite{YY}, let us consider the flow
\begin{align*}
q_t&=u_{\leq 0}(t,q(t,x)), \qquad q(0,x)=x.
\end{align*}
By standard ordinary differential equations theory, $q$ exists, is unique, and is defined on the whole interval $[0,T)$ as a function in $C^1([0,T))$. Moreover, it is not difficult to see that it is a $C^1$-diffeomorphism. We also note that $q_{xt}=u_xq_x$, which means that $\displaystyle q_x=e^{\int_0^t u_x(s,q(s,x))\, ds}>0$, hence $q$ is strictly increasing in $x$ for every $t$. Further,
\begin{align*}
\frac{d}{dt}u_{\leq 0}(t,q(t,x))&=(u_{\leq 0})_t+u_{\leq 0}(u_{\leq 0})_x=F.
\end{align*}
Then
\begin{align*}
\|u_{\leq 0}(t,q(t,x))\|_{L_x^\infty}&\lesssim \|u_{\leq 0}(0)\|_{L_x^\infty}+\int_0^t \|F\|_{L_x^\infty} \, ds \lesssim \|(u_0)_{\leq 0}\|_{L_x^\infty}+\int_0^tE_1+E_1^{1/2}\, ds.
\end{align*}
As $q$ is a diffeomorphism, we now infer that
\begin{align*}
\|(u(t))_{\leq 0}\|_{L_x^\infty}\lesssim\|(u_0)_{\leq 0}\|_{L_x^\infty}+t(E_1+E_1^{1/2}).
\end{align*}
For the high frequencies, we apply Sobolev embeddings and Bernstein's inequalities to estimate
\begin{align*}
\|(u(t))_{> 0}\|_{L_x^\infty}\lesssim E_1^{1/2}.
\end{align*}
Combining these estimates, we conclude that for every $t\in[0,T)$,
\begin{align*}
    \|u(t)\|_{L_x^\infty}&\lesssim \|u_0\|_{X^0}+t(E_1+E_1^{1/2}).
\end{align*}
Thus, for some constant $C>0$, and for every $t\in[0,T)$, we have
\begin{align*}
\|u_{xx}(t)\|^2_{L_x^2}&\lesssim \vert E_2\vert+ \|u(t)\|_{L_x^\infty}\|u_x(t)\|^2_{L_x^2}\\
&\lesssim \|u_0\|_{\dot H_x^2}^2+\|u_0\|_{X^0}E_1 + t(E_1+E_1^{1/2})E_1.
\end{align*}
We obtain the desired estimate for $\|u(t)\|_X$, where $t\in[0,T)$. In particular, the lifespan for $u$ may be extended indefinitely.

\section{A normal form analysis}\label{s:nf}

In this section, we use normal forms to construct an energy functional corresponding to $\dot H^{1 + s}$. Since \eqref{HS} exhibits a quasilinear behavior at low frequencies, we use a modified energy approach as introduced in \cite{Burgers-Hilbert}.

We may re-express the dispersive Hunter-Saxton \eqref{HS} as
\[
u_t+u_{xxx}= \partial_x^{-2}(u_xu_{xx}) - uu_x =: Q_1 + Q_2 =: Q.
\]
Thus we see that the formal normal form variable, based on the normal form correction for the KdV equation, is
\[
\tilde u = u + B(u, u) = u - \frac16 \D_x^{-2} (u^2) + \frac16 (\D_x^{-1} u)^2.
\]

To construct a modified energy for $\dot H^{1 + s}$, write
\[
A(D) = D^s P_{> 0}
\]
and consider
\[
\int A u_x \cdot A\left(u_x - \frac13 \D_x^{-1} (u^2) + \frac23 (\D_x^{-1} u) u\right) \, dx.
\]
Integrating by parts on the last two terms and rearranging, we obtain
\[
\int (A u_x)^2 - \frac13 A u \cdot A\left( u^2 + 2 \D_x^{-1} u \cdot u_x\right) \, dx.
\]
Then commuting $A$ through the last term, we have
\[
\int (A u_x)^2 - \frac13 A u \cdot ( A( u^2) + 2 [A, \D_x^{-1} u ] u_x +2 \D_x^{-1}u \cdot  A u_x ) \, dx.
\]
Lastly, integrating by parts on the last term, we define the modified energy
\[
\tilde E(t) := \int (A u_x)^2 -\frac13 Au \cdot (A(u^2) + 2 [A, \D_x^{-1} u ] u_x - Au \cdot u) \, dx.
\]
\begin{lemma}\label{Modified energy estimates}
If $u\in C_t^0X^s([0,T)\times\mathbb{R})$, then for every $t\in [0,T)$, we have
\begin{align*}
\|(u(t))_{>0}\|_{\dot H^{1 + s}_x}^2 = \tilde E(t) + O(E_1\|u(t)\|_{L_x^\infty}),
\end{align*}
and
\[
 \frac{d}{dt} \tilde E(t) \lesssim \|Au\|_{L_x^2}^2(\|u_x\|_{L_x^2}^2 + \|u_x\|_{L_x^\infty} \|u\|_{L_x^\infty}).\]
\end{lemma}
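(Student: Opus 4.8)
The plan is to handle the two assertions separately, beginning with the comparison between $\tilde E$ and $\|(u)_{>0}\|_{\dot H^{1+s}}^2$. The leading term already is the desired norm: since $A = D^s P_{>0}$ is a Fourier multiplier, Plancherel gives $\int (Au_x)^2\,dx = \|D^s P_{>0} u_x\|_{L_x^2}^2 = \|(u)_{>0}\|_{\dot H^{1+s}}^2$. It then remains to bound the three cubic correction terms by $E_1\|u\|_{L_x^\infty}$. For all of them I would exploit the two elementary facts that, because $s \le 1$ and $P_{>0}$ restricts to $|\xi|>1$, one has $\|Af\|_{L_x^2}\lesssim \|f_x\|_{L_x^2}$ and in particular $\|Au\|_{L_x^2}\lesssim E_1^{1/2}$. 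The term $\int (Au)^2 u\,dx$ is immediately $\lesssim \|Au\|_{L_x^2}^2\|u\|_{L_x^\infty}\lesssim E_1\|u\|_{L_x^\infty}$; the term $\int Au\cdot A(u^2)\,dx$ is estimated by Cauchy--Schwarz together with $\|A(u^2)\|_{L_x^2}\lesssim \|(u^2)_x\|_{L_x^2}\lesssim \|u\|_{L_x^\infty}E_1^{1/2}$; and the commutator term is handled by Cauchy--Schwarz and a commutator bound of the form $\|[A,\partial_x^{-1}u]u_x\|_{L_x^2}\lesssim \|u\|_{L_x^\infty}\|Au\|_{L_x^2}\lesssim \|u\|_{L_x^\infty}E_1^{1/2}$. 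Summing yields the claimed $O(E_1\|u\|_{L_x^\infty})$ error.

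For the energy estimate I would differentiate $\tilde E$ in time and substitute the equation in the form $u_t = -u_{xxx} + Q$ with $Q = Q_1 + Q_2$. For the quadratic part, the purely dispersive contribution vanishes by the antisymmetry identity $\int f\,\partial_x^3 f\,dx = 0$, leaving $\frac{d}{dt}\int(Au_x)^2\,dx = 2\int Au_x\cdot A\partial_x Q\,dx$. A direct computation using $u_x u_{xx} = \frac12\partial_x(u_x^2)$ gives $\partial_x Q = -\frac12 u_x^2 - uu_{xx}$, so this equals $-\int Au_x A(u_x^2)\,dx - 2\int Au_x A(uu_{xx})\,dx$. Commuting $A$ past $u$ in the last integral and integrating by parts isolates the dangerous term $\int u_x (Au_x)^2\,dx$, which carries the full high-regularity norm $\|Au_x\|_{L_x^2}^2 = \|(u)_{>0}\|_{\dot H^{1+s}}^2$ and therefore cannot be absorbed into the right-hand side on its own.

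The role of the cubic correction is precisely to cancel this term. Since $\tilde E$ is, up to quartic and commutator adjustments, the polarized square $\int Au_x\cdot A\partial_x(u + 2B)$ of the normal form variable $\tilde u = u + B$, differentiating the correction and substituting the linear flow $u_t \approx -u_{xxx}$ into its factors produces cubic dispersive terms whose coefficients are matched to the $\tfrac13,\tfrac23$ in the construction; these are designed to cancel $\int u_x(Au_x)^2\,dx$ and to combine with $-\int Au_x A(u_x^2)\,dx$, leaving only terms in which at least one derivative has been redistributed off the top-order factor. Substituting instead the nonlinear part $u_t \approx Q$ produces quartic terms, which are higher order and perturbative. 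I would then estimate every remaining term by $\|Au\|_{L_x^2}^2$ times $\|u_x\|_{L_x^2}^2 + \|u_x\|_{L_x^\infty}\|u\|_{L_x^\infty}$, using commutator estimates to trade a derivative on $A$ for a factor of $\|u\|_{L_x^\infty}$ or $\|u_x\|_{L_x^\infty}$ whenever $A$ meets a low-frequency factor, Bernstein's inequality to pass between $L_x^2$ and $L_x^\infty$ at frequencies $\lesssim 1$, and the bound $\|Au\|_{L_x^2}\lesssim E_1^{1/2}$.

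The main obstacle is exactly the cancellation just described: verifying that the derivative-losing contribution $\int u_x(Au_x)^2\,dx$ is removed, so that the final bound involves only the weaker quantity $\|Au\|_{L_x^2}^2$ rather than $\|Au_x\|_{L_x^2}^2$. This distinction is what makes the estimate useful, since $\|Au\|_{L_x^2}^2 \lesssim E_1$ is controlled by a conserved quantity, so the bound yields only polynomial-in-time growth of $\|(u)_{>0}\|_{\dot H^{1+s}}^2$ rather than exponential growth. Carrying out this cancellation cleanly requires careful commutator bookkeeping and tracking the precise numerical coefficients produced by the normal form, and this is where I expect the bulk of the technical work to lie.
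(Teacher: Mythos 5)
Your proof of the first identity is correct and is essentially the paper's: Plancherel for $\int (Au_x)^2\,dx$, then Cauchy--Schwarz together with $\|Au\|_{L_x^2}\lesssim \|u_x\|_{L_x^2}=E_1^{1/2}$, a product bound for $A(u^2)$, and a commutator bound for $[A,\partial_x^{-1}u]u_x$. Your outline for the energy estimate also starts along the paper's route: differentiate, substitute $u_t=-u_{xxx}+Q$, and use the normal-form correction to remove the dangerous cubic term $\int u_x(Au_x)^2\,dx$. But your description of the cancellation is already too weak: you say it leaves cubic terms ``in which at least one derivative has been redistributed off the top-order factor.'' No cubic term can survive at all --- by homogeneity, a cubic expression cannot be dominated by the quartic right-hand side $\|Au\|_{L_x^2}^2(\|u_x\|_{L_x^2}^2+\|u_x\|_{L_x^\infty}\|u\|_{L_x^\infty})$. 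What actually happens (and what the paper records as a straightforward computation) is that with $L_A(v,w)=-\frac13 A(vw)-\frac23[A,\partial_x^{-1}v]w_x+\frac13 Av\cdot w$, \emph{all} cubic terms cancel and $\frac{d}{dt}\tilde E=\int AQ\cdot L_A(u,u)+Au\cdot L_A(Q,u)+Au\cdot L_A(u,Q)\,dx$.

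The genuine gap is in your treatment of these remaining quartic terms, which you declare ``higher order and perturbative'' and plan to handle with single-commutator estimates, Bernstein, and $\|Au\|_{L_x^2}\lesssim E_1^{1/2}$; you instead locate the bulk of the work in the cubic cancellation, which is the easy part. The quartic contribution of $Q_2=-uu_x$ through the commutator term of $L_A$ produces, after the routine reductions, the pair $\int A(uu_x)\cdot[A,\partial_x^{-1}u]u_x\,dx$ and $\int Au_x\cdot[A,\partial_x^{-1}u](uu_x)\,dx$. Each of these, estimated individually by any product/commutator bound of the type you list, costs a factor of $\|Au_x\|_{L_x^2}$ (the full $\dot H_x^{1+s}$ norm), because $A$ or the extra derivative inevitably lands on a high-frequency copy of $u$ (e.g.\ the piece $T_uAu_x$ of $A(uu_x)$). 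A bound linear or quadratic in $\|Au_x\|_{L_x^2}$ is not of the stated form, and it destroys the point of the lemma: since $\|u_x\|_{L_x^\infty}$ is itself controlled only by the energy being propagated, Gr\"onwall would then give exponential growth or worse, rather than the polynomial-in-time bounds of Theorem~\ref{Low regularity global well-posedness}. The paper's resolution is a second cancellation that your proposal does not anticipate: after discarding genuinely perturbative pieces via paraproduct decomposition, the two dangerous terms are combined into the double commutator $-\int u_x\cdot[A[A,T_{\partial_x^{-1}u}],T_u]u_x\,dx$, whose frequency-localized symbol is computed explicitly and shown to have a bounded, integrable kernel, yielding $\|u\|_{L_x^\infty}\|u_x\|_{L_x^\infty}\|Au_k\|_{L_x^2}^2$ after summation over $k$. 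Without this double-commutator structure (or an equivalent substitute), the estimate cannot be closed with only $\|Au\|_{L_x^2}^2$ on the right-hand side.
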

\begin{proof}
We have
\begin{equation*}
\begin{aligned}
\|[A, \D_x^{-1} v ] w_x\|_{L_x^2}&\lesssim \|Aw\|_{L_x^2}\|v\|_{L_x^\infty}+\|Av\|_{L_x^2}\|w\|_{L_x^\infty}, \\
\|A(vw)\|_{L_x^2}&\lesssim \|Av\|_{L_x^2}\|w\|_{L_x^\infty}+\|Aw\|_{L_x^2}\|v\|_{L_x^\infty}.
\end{aligned}
\end{equation*}
Thus, the first bound is immediate. 

\

We now prove the energy estimate. First observe that $\frac{d}{dt}\tilde E$ consists only of quartic terms. Precisely, if we set
\[
L_A(v, w) := -\frac13 A(vw) - \frac23 [A, \D_x^{-1} v ] w_x + \frac13 Av  \cdot w,
\]
then a straightforward computation shows that
\[
\frac{d}{dt} \tilde E = \int AQ\cdot L_A(u, u) + Au \cdot L_A(Q, u) + Au \cdot L_A(u, Q) \, dx.
\]

We consider first the contribution from $Q_1$. Since
\[
\|L_A(v, w)\|_{L^2_x} \lesssim \|Av\|_{L_x^2}\|w\|_{L_x^\infty}+\|Aw\|_{L_x^2}\|v\|_{L_x^\infty},
\]
we have 
\begin{align*}
\int AQ_1\cdot L_A(u, u) + Au \cdot L_A(Q_1, u) &+ Au \cdot L_A(u, Q_1) \, dx \\
&\lesssim \|Au\|_{L_x^2}(\|AQ_1\|_{L_x^2}\|u\|_{L_x^\infty}+\|Au\|_{L_x^2}\|Q_1\|_{L_x^\infty}).
\end{align*}
To bound $Q_1$, we have
\begin{align*}
\|\partial_x^{-1}(u_x^2)\|_{L_x^\infty}&\lesssim \|u_x\|_{L_x^2}^2, \\
\|A\partial_x^{-1}(u_x^2)\|_{L_x^2}&\lesssim \|u_x\|_{L_x^\infty}\|Au\|_{L_x^2}
\end{align*}
which suffices.

\

For the contribution from $Q_2$, we consider each of the three terms in 
\[
L_A(u, u) = -\frac13 A(u^2) - \frac23 [A, \D_x^{-1} u ] u_x + \frac13 Au  \cdot u
\]
successively. From the third term, and the $Q_2$ contribution arising from the case where the time derivative falls on the lone $u$, 
\[
\int \frac13 Au \cdot Au \cdot uu_x \, dx \lesssim \|u\|_{L^\infty} \|u_x\|_{L^\infty} \|Au\|_{L^2}^2.
\]
On the other hand, when the derivative falls on $Au$, we write
\[
\int \frac16 Au \cdot A\D_x (u^2) \cdot u \, dx = \int \frac13 Au \cdot [A, u] u_x \cdot u + \frac13 Au \cdot u \cdot A u_x \cdot u \, dx.
\]
The latter term is the same as the previous case after an integration by parts, while 
\[
\int \frac13 Au \cdot [A, u] u_x \cdot u \, dx \lesssim \|Au\|_{L^2}^2 \|u_x\|_{L^\infty} \|u\|_{L^\infty}.
\]

From the first term in $L_A$, the case when the time derivative falls on $Au$ vanishes via an integration by parts. Then from the remaining contribution,
\[
\int  \frac13 Au \cdot A\D_x (u^3) \, dx = \int Au \cdot [A, u^2] u_x + Au \cdot u^2 \cdot A u_x \, dx.
\]
The latter term has already appeared, while 
\[
\int Au \cdot [A, u^2] u_x \, dx \lesssim \|Au\|_{L^2}^2 \|u_x\|_{L^\infty} \|u\|_{L^\infty}.
\]

Lastly, we have the commutator term from $L$. When the time derivative falls inside the commutator, we have
\[
\int Au\cdot [A,\partial_x^{-1}(uu_x)]u_x\,  dx\lesssim \|Au\|_{L^2}\|[A,\partial_x^{-1}(uu_x)]u_x\|_{L^2}\lesssim \|Au\|_{L^2}^2 \|u_x\|_{L^\infty} \|u\|_{L^\infty}.
\]
From the remaining contributions of $Q_2$, we are left with 
\begin{align*}
    \int A(uu_x)\cdot [A,\partial_x^{-1}u]u_x+Au\cdot[A,\partial_x^{-1}u](uu_x)_x\,  dx.
\end{align*}
Integrating by parts on the second term, and since 
\[
\int Au\cdot[A,u](uu_x)\,  dx\lesssim \|Au\|_{L^2}\|u_x\|_{L_x^\infty}\|A(u^2)\|_{L^2}\lesssim \|Au\|_{L^2}^2 \|u_x\|_{L^\infty} \|u\|_{L^\infty},
\]
it remains to bound
\begin{equation}\label{dcom}
\int A(uu_x)\cdot [A,\partial_x^{-1}u]u_x-Au_x\cdot[A,\partial_x^{-1}u](uu_x)\,  dx = -\int u_x\cdot [A[A,\partial_x^{-1}u],u]u_x\,  dx.
\end{equation}

Before exploiting the full commutator structure, we first reduce to paraproducts. 

From the first integral on the left hand side of \eqref{dcom}, we write
\begin{align*}
   \int A(uu_x)\cdot [A,\partial_x^{-1}u]u_x\, dx&= \int A(uu_x)\cdot[A,T_{\partial_x^{-1}u}]u_x\,  dx\\
   &\quad -\frac{1}{2}\int A(u^2)\cdot \partial_x(A(T_{u_x}\partial_x^{-1}u)+A\Pi(u_x,\partial_x^{-1}u))\,  dx\\
   &\quad +\half \int A(u^2)\cdot \partial_x(T_{Au_x}\partial_x^{-1}u+\Pi(Au_x,\partial_x^{-1}u))\,  dx.
\end{align*}
The last two lines are perturbative and may be discarded. Precisely, we have
\[
\|A(u^2)\|_{L_x^2}\lesssim\| u\|_{L_x^\infty}\|Au\|_{L_x^2}
 \]
while 
\begin{align*}
    \|\D_xA(T_{u_x}\partial_x^{-1}u)\|_{L_x^2}&\lesssim \|u_x\|_{L_x^\infty}\|Au\|_{L_x^2},\\
    \|\D_x(T_{Au_x}\partial_x^{-1}u)\|_{L_x^2}&\lesssim \|u_x\|_{L_x^\infty}\|Au\|_{L_x^2},
\end{align*}
with the same estimate for the balanced frequency terms. 

Next, we proceed further to write 
\begin{align*}
   \int A(uu_x)\cdot [A,T_{\partial_x^{-1}u}]u_x\, dx &= \int A(T_uu_x)\cdot [A,T_{\partial_x^{-1}u}]u_x\, dx\\
   &\quad +\int A(T_{u_x} u)\cdot [A,T_{\partial_x^{-1}u}]u_x\, dx+\int A\Pi(u_x, u)\cdot [A,T_{\partial_x^{-1}u}]u_x\, dx.
\end{align*}
The second line is perturbative as before. Precisely,
\[
\int A(T_{u_x} u)\cdot [A,T_{\partial_x^{-1}u}]u_x\, dx \lesssim \|u_x\|_{L^\infty_x} \|Au\|_{L^2_x} \cdot \|u\|_{L^\infty_x}\|Au\|_{L^2_x}
\]
with the same estimate for the balanced frequency term.

A similar analysis holds for the second term on the left hand side of \eqref{dcom}, so we are only left to estimate
\begin{align*}
  \int A(T_uu_x)\cdot[A,T_{\partial_x^{-1}u}]u_x\, dx-  \int Au_x\cdot[A,T_{\partial_x^{-1}u}](T_uu_x)\,  dx =  -\int u_x\cdot [A[A,T_{\partial_x^{-1}u}],T_u]u_x\, dx.
\end{align*}

\

Define
\[
L(u, v, w) = D^{-s}\D_x [A[A, T_{\D_x^{-1} u}], T_{\D_x^{-1} v}] D^{-s} w_{x}
\]
and let $L_k$ denote the frequency $k$ component.

Let $\displaystyle a(\xi)=\vert\xi\vert^s(1-\phi(\xi))$ be the symbol of $A$, where $\phi$ is the symbol of the Littlewood-Paley projector $P_{\leq 0}$, and
\[
 \phi_k(\xi)=\phi\left(\frac{\xi}{2^{k-4}}\right), \qquad \psi_k(\xi)=\phi\left(\frac{\xi}{2^k}\right)-\phi\left(\frac{\xi}{2^{k-1}}\right).
\]
The symbol of $L_k$ is
\begin{align*}
    L_k(\xi,\eta,\zeta)&=\phi_k(\xi)\phi_k(\eta)\zeta\psi_k(\zeta) (\xi\eta\vert\xi+\eta+\zeta \vert^s\vert\zeta\vert^s)^{-1} \\
    &\quad \cdot \left(a(\xi+\eta+\zeta)(a(\xi+\eta+\zeta)-a(\eta+\zeta))-a(\xi+\zeta)(a(\xi+\zeta)-a(\zeta))\right).
\end{align*}
This symbol is supported in the region $\displaystyle\{(\xi,\eta,\zeta)\vert\xi,\eta\lesssim 2^k,\zeta\sim 2^k\}$, is smooth, and its associated kernel is bounded and integrable. Thus, we have the estimate:
\begin{align*}
   -\int (u_k)_x\cdot [A[A,\partial_x^{-1}u_{<k}],u_{<k}](u_k)_x\, dx&=\int Au_k \cdot L_k(u,u_x,Au_k) \, dx \\
   &\lesssim \|u\|_{L_x^\infty}\|u_x\|_{L_x^\infty}\|Au_k\|^2_{L_x^2}.
\end{align*}
Thus,
\begin{align*}
    \int u_x\cdot [A[A,T_{\partial_x^{-1}u}],T_u]u_x\, dx\lesssim \sum_{\substack{k}}\|u\|_{L_x^\infty}\|u_x\|_{L_x^\infty}\|Au_k\|^2_{L_x^2}\lesssim \|u\|_{L_x^\infty}\|u_x\|_{L_x^\infty}\|Au\|^2_{L_x^2}
\end{align*}
By putting everything together, we obtain the desired estimate.

\end{proof}

By combining the previous result with the $L_x^\infty$ bounds from Theorem \ref{Global well-posedness}, we establish bounds on the growth of the solutions in $\dot H^{1 + s}$.

\begin{proposition}\label{Uniform bounds}
Let $T > 0$, $I = [0,T]$ or $I=[0,T)$, and $u\in C_t^0X^s(I\times\mathbb{R})$ solve \eqref{HS}. Then we have the bounds \eqref{low-reg-growth-bounds}.
\end{proposition}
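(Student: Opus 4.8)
The first bound requires no new work: it is precisely the $L^\infty_x$ estimate established in the proof of Theorem~\ref{Global well-posedness}, which as remarked there only uses $u \in C^0_t X^0$. Since $X^s \hookrightarrow X^0$ for $s \in (\frac12,1)$, it transfers verbatim to the present setting. So the plan is to concentrate entirely on the $\dot H^{1+s}_x$ bound. I would first split $\|u\|_{\dot H^{1+s}_x}^2 = \|u_{\le 0}\|_{\dot H^{1+s}_x}^2 + \|u_{>0}\|_{\dot H^{1+s}_x}^2$ and dispatch the low frequencies at once: since $1+s>1$ while these frequencies are $\lesssim 1$, one has $\|u_{\le 0}\|_{\dot H^{1+s}_x} \lesssim \|u_{\le 0}\|_{\dot H^1_x} \le E_1^{1/2}$. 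All the difficulty is in the high frequencies, which I would control through the modified energy $\tilde E$ of Lemma~\ref{Modified energy estimates}.

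Write $M(t) = \|u(t)\|_{L^\infty_x}$. Lemma~\ref{Modified energy estimates} supplies the coercivity relation $\|u_{>0}\|_{\dot H^{1+s}_x}^2 = \tilde E + O(E_1 M)$ and the growth bound $\frac{d}{dt}\tilde E \lesssim \|Au\|_{L^2_x}^2(\|u_x\|_{L^2_x}^2 + \|u_x\|_{L^\infty_x} M)$. The inputs I would feed in are: (i) $\|Au\|_{L^2_x}^2 = \|u_{>0}\|_{\dot H^s_x}^2 \lesssim E_1$, which holds because $s \le 1$ and the frequencies are $\gtrsim 1$, so this factor is pinned by the \emph{conserved} quantity; (ii) $\|u_x\|_{L^2_x}^2 = E_1$; and (iii) the pointwise bound $\|u_x\|_{L^\infty_x} \lesssim E_1^{1/2} + \|u_{>0}\|_{\dot H^{1+s}_x}$, where the low frequencies are handled by Bernstein and the high frequencies by the embedding $\dot H^s_x \hookrightarrow L^\infty_x$ (this is where $s > \frac12$ is used). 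Feeding the coercivity relation into (iii) gives $\|u_x\|_{L^\infty_x} \lesssim E_1^{1/2} + \tilde E^{1/2} + (E_1 M)^{1/2}$ (up to the standard nonnegativity technicality for $\tilde E^{1/2}$, absorbed into the $O(E_1 M)$ terms).

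Substituting this together with the already-established $M(t) \lesssim \langle t\rangle(\|u_0\|_{X^0} + E_1 + E_1^{1/2})$, the growth inequality takes the form $\frac{d}{dt}\tilde E \lesssim E_1 M\,\tilde E^{1/2} + \alpha(t)$, where $\alpha$ collects the lower-order contributions $E_1^2 + E_1^{3/2}M + E_1^{3/2}M^{3/2}$. The structural point is that the feedback from $\tilde E$ enters only at the power $\frac12$, so this is a square-root (Bihari-type) inequality rather than a linear Gronwall inequality. I would close it by the elementary device of setting $N(t) = \sup_{[0,t]} \tilde E^{1/2}$, integrating, and solving the quadratic inequality $N^2 \lesssim \tilde E(0) + N\int_0^t E_1 M\,ds + \int_0^t \alpha\,ds$ for $N$, which yields $\tilde E(t)^{1/2} \lesssim \tilde E(0)^{1/2} + \int_0^t E_1 M\,ds + (\int_0^t\alpha\,ds)^{1/2}$. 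Since $\int_0^t E_1 M\,ds \lesssim E_1\langle t\rangle^2 \langle \|u_0\|_{X^0} + E_1\rangle$, squaring produces exactly the leading term $\langle t\rangle^4 E_1^2 \langle\|u_0\|_{X^0} + E_1\rangle^2$, while the $\alpha$-contributions and the initial value $\tilde E(0) \lesssim \|u_0\|_{\dot H^{1+s}_x}^2 + E_1\|u_0\|_{X^0}$ are of lower order. Adding back the low-frequency bound and the $O(E_1 M)$ term from coercivity then gives \eqref{low-reg-growth-bounds}.

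The main obstacle is the $\|u_x\|_{L^\infty_x}$ factor in the energy estimate: it cannot be bounded by the conserved $E_1$ alone and necessarily reintroduces the very $\dot H^{1+s}_x$ norm we are trying to estimate, so a naive Gronwall argument would only yield exponential growth. Two features rescue polynomial-in-time growth: the companion factor $\|Au\|_{L^2_x}^2$ is controlled by the conserved $E_1$, so the coefficient multiplying $\tilde E^{1/2}$ does not itself grow with $\tilde E$; and the feedback enters at the half power, converting linear-Gronwall exponential growth into the square-root-Gronwall polynomial growth that produces $\langle t\rangle^4$. Verifying that this mechanism reproduces the sharp $\langle t\rangle^4$ power and that all cross terms are genuinely lower order is the step that will require the most care.
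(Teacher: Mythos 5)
Your proposal is correct and follows essentially the same route as the paper's proof: the pointwise bound is quoted from Theorem~\ref{Global well-posedness}, the low frequencies are bounded by $E_1$, and the high frequencies are controlled via the modified energy of Lemma~\ref{Modified energy estimates} using the same three inputs ($\|Au\|_{L_x^2}^2\lesssim E_1$, the Bernstein/Sobolev bound $\|u_x\|_{L_x^\infty}\lesssim E_1^{1/2}+\|u_{>0}\|_{\dot H_x^{1+s}}$ requiring $s>\frac12$, and the coercivity relation), leading to the same square-root Gr\"onwall mechanism and the same $\langle t\rangle^4$ growth. The only difference is bookkeeping: the paper absorbs your lower-order terms $\alpha$ and the nonnegativity issue into the single quantity $E_1\|u\|_{L_t^\infty X^0}\bigl(\tilde E(t)+E_1+CE_1\|u\|_{L_{t,x}^\infty}\bigr)^{1/2}$ and integrates the resulting differential inequality directly, whereas you keep $\alpha(t)$ separate and solve a quadratic inequality for $\sup_{[0,t]}\tilde E^{1/2}$ --- these are equivalent ways to close the same estimate.
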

\begin{proof}
We have from Theorem \ref{Global well-posedness} the pointwise estimates. It remains to establish the energy bounds.

Let $\tilde{E}$ be the modified energy functional of Lemma~\ref{Modified energy estimates}, so that for $t \in [0, T)$,
\begin{align*}
 \frac{d}{dt} \tilde E(t) &\lesssim \|Au\|_{L_x^2}^2(\|u_x\|_{L_x^2}^2 + \|u_x\|_{L_x^\infty} \|u\|_{L_x^\infty}) \\
 &\lesssim \|u_x(t)\|^4_{L_x^2} +\|u_x(t)\|^3_{L_x^2}\|u(t)\|_{L_x^\infty}+ \|u_x(t)\|^2_{L_x^2}\|u(t)\|_{L_x^\infty}\|u(t)_{>0}\|_{\dot H_x^{1 + s}}
\end{align*}
and since we have the energy equivalence
\begin{align*}
\|(u(t))_{>0}\|_{\dot H^{1 + s}_x}^2 = \tilde E(t) + O(E_1\|u(t)\|_{L_x^\infty}),
\end{align*}
we find 
\begin{align*}
 \frac{d}{dt} \tilde E(t) &\lesssim E_1^2 +E_1^{3/2}\|u(t)\|_{L_x^\infty}+ E_1\|u(t)\|_{L_x^\infty}(\tilde E(t) +  CE_1\|u(t)\|_{L_x^\infty})^{1/2} \\
 &\lesssim E_1^{2} + E_1\|u(t)\|_{L_x^\infty}(\tilde E(t)+E_1+CE_1\|u(t)\|_{L_x^\infty})^{1/2} \\
  &\lesssim E_1\|u(t)\|_{X^0}(\tilde E(t)+E_1+CE_1\|u(t)\|_{L_x^\infty})^{1/2} \\
  &\lesssim E_1\|u\|_{L_t^\infty X^0}(\tilde E(t)+E_1+CE_1\|u\|_{L_{t,x}^\infty})^{1/2}.
\end{align*}
Integrating in $t$, we find that for every $t\in[0,T)$,
\begin{align*}
\left(CE_1\|u\|_{L_{t,x}^\infty} + E_1 + \tilde E(t)\right)^{\frac{1}{2}}&\lesssim tE_1\|u\|_{L_t^\infty X^0}+\left(CE_1\|u\|_{L_{t,x}^\infty} + E_1 + \tilde E(0)\right)^{\frac{1}{2}}.
\end{align*}
Thus,
\begin{align*}
    \tilde{E}(t)&\lesssim t^2E_1^2\|u\|_{L_t^\infty X^0}^2+E_1(\|u\|_{L_{t,x}^\infty} + 1) + \tilde E(0).
\end{align*}
Using the first inequality from Lemma \ref{Modified energy estimates} and the low frequency bound
\[
\|(u(t))_{\leq 0}\|_{\dot H_x^{1+s}}^2\lesssim E_1,
\]
we have 
\begin{align*}
     \|u(t)\|_{\dot H_x^{1+s}}^2 &\lesssim t^2E_1^2\|u\|_{L_t^\infty X^0}^2 +E_1(\|u\|_{L_{t,x}^\infty} + 1) +\|u_0\|^2_{\dot H_x^{1+s}}.
\end{align*}
Combined with the pointwise estimates, we obtain the stated bound.
\end{proof}

This establishes the bounds in Theorem \ref{Low regularity global well-posedness}. Global well-posedness now follows from the local result of Theorem \ref{Low regularity local well-posedness}, which we prove in the next two sections.

\section{An estimate for the linearized equation}\label{s:lin}

The linearized equation corresponding to \eqref{HS} is
 \begin{equation*}
 \begin{aligned}
     w_t+(uw)_x+w_{xxx}=\D_x^{-1}(u_xw_x),
 \end{aligned}
 \end{equation*}
 which can be rewritten as 
 \begin{equation}\label{paralinHS}
 \begin{aligned}
     w_t+ \D_x^{-1} (u_{xx} w) + uw_x +w_{xxx}= f.
 \end{aligned}
 \end{equation}
  Applying $D^s$ with $s \in (\half, 1)$ to \eqref{paralinHS}, and writing $v = D^s w$, we have
   \begin{equation}\label{paralinHS-half}
   \begin{aligned}
     v_t + u v_x +v_{xxx}&= - D^s\D_x^{-1}(T_{u_{xx}}w + T_w u_{xx} + \Pi(w, u_{xx})) - [D^s, u] w_x + D_x f.
     \end{aligned}
 \end{equation}
 
 \begin{lemma}\label{lin-full}
 Let $T > 0$ and $I = [0, T]$. If $u \in L_t^\infty X^s$ is a solution of \eqref{HS} in $I$  and $w \in C_t^0(L_x^\infty\cap\dot H_x^s)(I\times\mathbb{R})$ is a solution of \eqref{paralinHS}, then by shrinking $T$ enough depending on $\|u\|_{L_t^\infty X^s(I\times\mathbb{R})}$, we have
 \begin{align*}
 \|w\|_{L_t^\infty(L_x^\infty\cap\dot H_x^s)}&\lesssim\|w_0\|_{L_x^\infty\cap\dot H_x^s} + \|f\|_{L^1_t (L^\infty_x \cap \dot H^s_x)}.
\end{align*}

 \end{lemma}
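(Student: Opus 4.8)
The plan is to control the two pieces of the $L^\infty_x\cap\dot H^s_x$ norm of $w$ separately, deriving for each a differential inequality of the schematic form
\[
\frac{d}{dt}\|w\| \lesssim C(\|u\|_{X^s})\bigl(\|w\|_{L^\infty_x}+\|w\|_{\dot H^s_x}\bigr) + \|f\|_{L^\infty_x\cap\dot H^s_x},
\]
and then to close the estimate by Gr\"onwall, shrinking $T=T(\|u\|_{L^\infty_t X^s})$ so that the time-integrated coupling terms are absorbed on the left. Throughout I will use that $u\in X^s$ with $s>\tfrac12$ gives $u_x\in\dot H^s_x\cap L^2_x\hookrightarrow L^\infty_x$, so $\|u\|_{W^{1,\infty}_x}\lesssim\|u\|_{X^s}$, and I write $E_1=\|u_x\|_{L^2_x}^2$.

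\textbf{The $\dot H^s_x$ estimate.} Here I work with $v=D^sw$, which by \eqref{paralinHS-half} solves a transport--dispersive equation with principal part $v_t+uv_x+v_{xxx}$, i.e. an equation of the form \eqref{linear} with $a=u$ and $b_x=0$. Lemma~\ref{L_x^2 well-posedness} then yields
\[
\frac{d}{dt}\|v\|_{L^2_x}^2 \lesssim \|F\|_{L^2_x}\|v\|_{L^2_x} + \|u_x\|_{L^\infty_x}\|v\|_{L^2_x}^2,
\]
where $F$ collects the paralinearized nonlocal term, the commutator $[D^s,u]w_x$, and the image of $f$, whose $L^2_x$ norm is $\|f\|_{\dot H^s_x}$. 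The commutator is handled by a Kato--Ponce estimate, $\|[D^s,u]w_x\|_{L^2_x}\lesssim\|u_x\|_{L^\infty_x}\|w\|_{\dot H^s_x}$; the low--high and high--low paraproducts $D^s\partial_x^{-1}(T_{u_{xx}}w)$ and $D^s\partial_x^{-1}(T_wu_{xx})$ localize at a single output frequency and are bounded by $\|u_x\|_{L^\infty_x}\|w\|_{\dot H^s_x}$ and $\|w\|_{L^\infty_x}\|u\|_{\dot H^{1+s}_x}$ respectively. The delicate term is the balanced piece $D^s\partial_x^{-1}\Pi(w,u_{xx})$: since $s>\tfrac12$ the high input frequencies are summable, while the low input frequencies are controlled using the extra decay $\|P_j u_x\|_{L^2_x}\lesssim 2^{js}E_1^{1/2}$ afforded by $u\in\dot H^1_x$; together these give $\|D^s\partial_x^{-1}\Pi(w,u_{xx})\|_{L^2_x}\lesssim\|u\|_{X^s}\|w\|_{\dot H^s_x}$.

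\textbf{The $L^\infty_x$ estimate.} The high frequencies $w_{>0}$ are immediately controlled by $\|w\|_{\dot H^s_x}$, since $\|w_{>0}\|_{L^\infty_x}\lesssim\sum_{k>0}2^{k/2}\|P_kw\|_{L^2_x}\lesssim\|w\|_{\dot H^s_x}$ using $s>\tfrac12$. For the low frequencies I apply Lemma~\ref{inftyest} to the transport form $w_t+uw_x+w_{xxx}=f-\partial_x^{-1}(u_{xx}w)$ with $a=u$, obtaining
\[
\frac{d}{dt}\|w_{\le 0}\|_{L^\infty_x}\lesssim \|w_{\le 0}\|_{L^\infty_x}+\|f_{\le 0}\|_{L^\infty_x}+\bigl\|\bigl(\partial_x^{-1}(u_{xx}w)\bigr)_{\le 0}\bigr\|_{L^\infty_x}+\|u\|_{W^{1,\infty}_x}\|w\|_{L^\infty_x}.
\]
The step I expect to be the main obstacle is the low-frequency $L^\infty_x$ bound on the nonlocal source, for which $\partial_x^{-1}$ is a non-integrable multiplier: a naive Bernstein $L^2_x\to L^\infty_x$ estimate, summed over low output frequencies, diverges because $\partial_x^{-1}$ amplifies precisely the frequencies that the balanced interaction floods and that $\dot H^s_x$ ($s<1$) fails to control.

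\textbf{Resolving the obstacle.} My plan is to exploit the primitive structure exactly as in the proof of Theorem~\ref{Global well-posedness}, where $\partial_x^{-1}(u_x^2)$ was bounded in $L^\infty_x$ by its total mass $\|u_x\|_{L^2_x}^2$; that is, to use $\|\partial_x^{-1}g\|_{L^\infty_x}\le\|g\|_{L^1_x}$ rather than a Bernstein $L^2_x\to L^\infty_x$ bound. Writing $\partial_x^{-1}(u_xw_x)=u_xw-\partial_x^{-1}(u_{xx}w)$ and decomposing $u_xw_x=T_{u_x}w_x+T_{w_x}u_x+\Pi(u_x,w_x)$, the boundary term $u_xw$ is controlled pointwise by $\|u_x\|_{L^\infty_x}\|w\|_{L^\infty_x}$, while for the remaining contributions I bound the relevant low-output pieces in $L^1_x$ before applying $\partial_x^{-1}$. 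The balanced term is genuinely $L^1_x$: summing $\|P_j u_x\cdot P_jw_x\|_{L^1_x}\lesssim\|P_ju_x\|_{L^2_x}\|P_jw_x\|_{L^2_x}\sim 2^{j(1-2s)}$ converges because $s>\tfrac12$ (with the low-frequency decay of $u_x$ handling $j<0$), giving $\|\Pi(u_x,w_x)\|_{L^1_x}\lesssim\|u\|_{X^s}\|w\|_{\dot H^s_x}$; the two paraproduct pieces, after projecting to frequencies $\le 0$, are likewise summable in $L^1_x$ using $u_x\in L^2_x$ and the $\dot H^1_x$ low-frequency decay. This yields $\|(\partial_x^{-1}(u_{xx}w))_{\le 0}\|_{L^\infty_x}\lesssim\|u\|_{X^s}\|w\|_{\dot H^s_x}$. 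Adding the $\dot H^s_x$ and $L^\infty_x$ inequalities, integrating in time, and choosing $T$ small enough (depending only on $\|u\|_{L^\infty_t X^s}$) to absorb the $\|w\|_{L^\infty_x\cap\dot H^s_x}$ terms then gives the stated bound.
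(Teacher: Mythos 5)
Your proposal is correct and follows the same skeleton as the paper's own proof: conjugate \eqref{paralinHS} by $D^s$ to obtain \eqref{paralinHS-half}, apply the energy estimate of Lemma~\ref{L_x^2 well-posedness} with $a=u$, bound the source terms via the paraproduct trichotomy, control $w_{>0}$ in $L^\infty$ by Sobolev embedding and $w_{\le 0}$ by Lemma~\ref{inftyest}, and close by absorption after shrinking $T$. The deviations are localized to two term estimates, and both of your variants work, with one correction needed. For $\|D^s\partial_x^{-1}\Pi(w,u_{xx})\|_{L^2}$, the paper lowers the Lebesgue exponent by Sobolev embedding, bounding it by $\|D\partial_x^{-1}\Pi(w,u_{xx})\|_{L^{2/(3-2s)}}$ and concluding with bilinear H\"older estimates, while you sum dyadic blocks directly; your route is more elementary, but the inequality you invoke for the low input frequencies, $\|P_j u_x\|_{L^2}\lesssim 2^{js}E_1^{1/2}$, is false as stated --- $u\in\dot H^1$ gives no low-frequency decay of $u_x$ in $L^2$. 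What actually makes the sum over $j\le 0$ converge is the derivative gain $\|P_j u_{xx}\|_{L^2}\lesssim 2^j\|u_x\|_{L^2}$ together with the factor $2^{j(1-s)}$ relating $P_j w_x$ to $P_j D^s w$; both are available, so this is a repairable misstatement rather than a gap (the same imprecision recurs in your $L^1$ bound for $\Pi(u_x,w_x)$ at $j<0$, where convergence again comes from the $w$ factor, not from decay of $u_x$). For the low-frequency $L^\infty$ bound on the nonlocal term, the paper estimates the three paraproduct pieces of $P_{\le 0}\partial_x^{-1}(w u_{xx})$ by three different devices (a fractional-integration bound in $L^{1/(1-s)}$ for $T_{u_{xx}}w$, the embedding $H^s\hookrightarrow L^\infty$ for $T_w u_{xx}$, and the $L^1\to L^\infty$ bound for the balanced piece), whereas you first integrate by parts, writing $\partial_x^{-1}(u_{xx}w)=u_x w-\partial_x^{-1}(u_x w_x)$, and then run every piece through $\|\partial_x^{-1}g\|_{L^\infty}\le \|g\|_{L^1}$ on low-output frequencies; this buys a single uniform mechanism (the one the paper reserves for the balanced piece alone) at the cost of the harmless boundary term $u_x w$, which you control pointwise.
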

 \begin{proof}

We consider the homogeneous problem with $f = 0$, as the proof below easily generalizes. We first bound the source terms of \eqref{paralinHS-half} in $L^2$. For the first two source terms, we have
  \[
  \| D^s\D_x^{-1}(T_{u_{xx}}w)\|_{L_x^2} \lesssim \|w\|_{\dot H_x^s} \|u_x\|_{L_x^\infty}\lesssim \|w\|_{\dot H_x^s} \|u\|_{\dot H_x^{1}\cap\dot H_x^{1+s}}
 \]
 and
 \[
  \| D^s\D_x^{-1}(T_w u_{xx})\|_{L_x^2} \lesssim \|w\|_{L_x^\infty} \|u\|_{\dot H_x^{1 + s}}.
 \]
 For the balanced frequency case, we have
 \begin{align*}
     \| D_x^s\D_x^{-1}\Pi(w,u_{xx})\|_{L_x^2}&\lesssim \| D^{1}\D_x^{-1}\Pi(w,u_{xx})\|_{L_x^{\frac{2}{3-2s}}}\lesssim  \|D_x^{s}w\|_{L_x^{2}} \|D_x^{2-s}u\|_{L_x^{\frac{1}{1-s}}}\\
     &\lesssim \|D_x^{s}w\|_{L_x^{2}} \|D_x^{\frac{3}{2}}u\|_{L_x^{2}}\lesssim\|w\|_{\dot H_x^s} \|u\|_{\dot H_x^{1}\cap\dot H_x^{1+s}}.
 \end{align*}
  Lastly, for the commutator term, we have
 \[
 \|[D^s, u] w_x\|_{L_x^2} \lesssim \|u_x\|_{L_x^\infty} \|w\|_{\dot H^s_x} .
 \]
 
By applying the energy estimate from Lemma \ref{L_x^2 well-posedness}, we get that for every $t\in [0,T]$,
\begin{equation*}\begin{aligned}
 \|w(t)\|_{\dot H_x^s}&\leq e^{\frac{C}{2}\int_0^t\|u_x(\tau)\|_{L_x^\infty}\, d\tau}\left(\|w_0\|_{\dot H_x^s}+\int_0^te^{-\int_0^\tau\|u_x(\eta)\|_{L_x^\infty}d\eta}\|u(\tau)\|_{\dot H_x^1\cap\dot H_x^{1 + s}}\|w(\tau)\|_{L_x^\infty\cap\dot H_x^s}\, d\tau\right)\\
 &\lesssim \|w_0\|_{\dot H_x^s}+ T\|u\|_{L_t^\infty X^s}\|w\|_{L_x^\infty\cap\dot H_x^s}.
\end{aligned}\end{equation*}

Next, to obtain an $L^\infty$ estimate, it suffices to consider the low frequencies since by Sobolev embedding,
  \begin{align*}
      \|w_{> 0}\|_{L_{x}^\infty}\lesssim \|w\|_{\dot H^s_x}.
  \end{align*} 
  For the first source term, we decompose into paraproducts as before to estimate
   \begin{align*}
 \| P_{\leq 0}\D_x^{-1}(T_{u_{xx}} w)\|_{L_x^\infty} &\lesssim \| P_{\leq 0}\D_x^{-1}(T_{u_{xx}} w)\|_{L_x^{\frac{1}{1-s}}}\lesssim \|D_x^{-\frac{s}{2}}u_x\|_{L_x^{\frac{2}{1-s}}}\|D_x^{\frac{s}{2}}w\|_{L_x^{\frac{2}{1-s}}}\\
  &\lesssim \|u_x\|_{L_x^2}\|w\|_{\dot H_x^s} \lesssim \|w\|_{\dot H_x^s} \|u\|_{\dot H_x^1\cap\dot H_x^{1+s}}, \\
 \|P_{\leq 0} \D_x^{-1} (T_{w} u_{xx})\|_{L_x^\infty} &\lesssim \| \D_x^{-1}(T_{w}u_{xx})\|_{H_x^s}\lesssim \|w\|_{L_x^\infty} \|u\|_{\dot H_x^1\cap\dot H_x^{1+s}},
  \end{align*}
 and
 \[
 \|P_{\leq 0} \D_x^{-1} \Pi(u_{xx}, w)\|_{L_x^\infty} \lesssim  \|P_{\leq 0} \Pi(u_{xx}, w)\|_{L_x^1} \lesssim \|u_x\|_{\dot H^{1-s}_x} \|w\|_{\dot H^s_x}\lesssim \|u\|_{\dot H^{1}_x\cap\dot H^{1+s}_x} \|w\|_{\dot H^s_x}.
 \]
 Thus, 
 \[
 \|P_{\leq 0} \D_x^{-1} (w u_{xx})\|_{L_x^\infty}\lesssim \|w\|_{L_x^\infty} \|u\|_{\dot H^{1}_x\cap\dot H^{1+s}_x} + \|u\|_{\dot H^{1}_x\cap\dot H^{1+s}_x} \|w\|_{\dot H^s_x}.
 \]
From Lemma \ref{inftyest}, with $T'$ sufficiently small, we have
  \begin{equation*}
 \begin{aligned}
     \|w_{\leq 0}\|_{L_{t,x}^\infty} \lesssim \|(w_0)_{\leq 0}\|_{L_x^\infty} + T'\|u\|_{L_t^\infty X^s}\|w\|_{L_t^\infty \dot H_x^{s}}.
 \end{aligned}
 \end{equation*}
  
  Putting everything together, for $t\in [0,T']$ we get
  \begin{align*}
 \|w(t)\|_{L_x^\infty\cap\dot H_x^s}&\lesssim \|w_0\|_{L_x^\infty\cap\dot H_x^s}+T'\|u\|_{L_t^\infty X^s}\|w\|_{L_x^\infty\cap\dot H_x^s}.
\end{align*}
By further shrinking $T'$ depending on $\displaystyle\|u\|_{L_t^\infty X^s}$, we obtain the desired estimate.
 \end{proof}
We now prove a result regarding differences of solutions, that is going to be used in order to justify uniqueness of $C_t^0X^s$-solutions in the proof of Theorem \ref{Low regularity local well-posedness}.

\begin{lemma}\label{Regularity for differences}
 Let $T > 0$ and $I = [0, T]$. Let $u,v\in C_t^0X^s(I\times\mathbb{R})$ solve \eqref{HS} with $u_0-v_0\in L_x^\infty\cap\dot H_x^s$. Then $u-v\in L_t^\infty(L_x^\infty\cap\dot H_x^s)(I\times\mathbb{R})$, and for $T$ sufficiently small depending on $\|(u, v)\|_{L_t^\infty X^s}$, we have
 \begin{align*}
 \|u-v\|_{L_t^\infty(L_x^\infty\cap\dot H_x^s)}&\lesssim \|u_0-v_0\|_{L_x^\infty\cap\dot H_x^s}.
 \end{align*}
\end{lemma}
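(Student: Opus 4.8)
The plan is to reduce the difference equation to the paralinearized linearized equation \eqref{paralinHS} and then invoke Lemma~\ref{lin-full}. First I would set $w = u - v$ and subtract the two copies of \eqref{HS}. Using $uu_x - vv_x = uw_x + wv_x$ and $u_x^2 - v_x^2 = w_x(u_x + v_x)$, the difference satisfies
\[
w_t + uw_x + wv_x + w_{xxx} = \tfrac12 \partial_x^{-1}(w_x(u_x+v_x)).
\]
Integrating by parts in the source term, $\tfrac12\partial_x^{-1}(w_x(u_x+v_x)) = \tfrac12(u_x+v_x)w - \tfrac12\partial_x^{-1}((u_{xx}+v_{xx})w)$, and collecting the first-order terms, the apparently quadratic contribution $wv_x - \tfrac12(u_x+v_x)w = -\tfrac12 ww_x$ combines with the transport term $uw_x$ into $(u - \tfrac12 w)w_x = \tfrac12(u+v)w_x$. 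This is the key algebraic observation: with $\bar u := \tfrac12(u+v)$, the difference obeys
\[
w_t + \bar u w_x + w_{xxx} = -\partial_x^{-1}(\bar u_{xx}\, w),
\]
which is precisely \eqref{paralinHS} with coefficient $\bar u$ in place of $u$ and with $f = 0$.

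Next I would observe that $\bar u \in L_t^\infty X^s$ with $\|\bar u\|_{L_t^\infty X^s} \le \tfrac12(\|u\|_{L_t^\infty X^s} + \|v\|_{L_t^\infty X^s})$, so that $\bar u$ satisfies exactly the hypotheses placed on the coefficient in Lemma~\ref{lin-full}. Although Lemma~\ref{lin-full} is stated for a coefficient solving \eqref{HS}, its proof uses only the $X^s$ bound on the coefficient: every source term in the paralinearized equation \eqref{paralinHS-half} is estimated through the paraproduct bounds in terms of $\|\bar u\|_{X^s}$, and the conclusion follows from the energy estimate of Lemma~\ref{L_x^2 well-posedness} together with Lemma~\ref{inftyest}, none of which invoke the evolution of the coefficient. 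Applying this with $w_0 = u_0 - v_0 \in L_x^\infty \cap \dot H_x^s$ and $f = 0$, and shrinking $T$ depending on $\|\bar u\|_{X^s} \lesssim \|(u,v)\|_{L_t^\infty X^s}$, yields
\[
\|w\|_{L_t^\infty(L_x^\infty \cap \dot H_x^s)} \lesssim \|u_0 - v_0\|_{L_x^\infty \cap \dot H_x^s},
\]
which is the desired bound.

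The main obstacle is not the estimate itself but justifying that the computation above is rigorous, i.e.\ that $w(t)$ genuinely lies in $L_x^\infty \cap \dot H_x^s$ so that the energy argument of Lemma~\ref{lin-full} can be run rather than being merely formal. A priori, $u - v$ is only known to lie in $\dot H_x^1 \cap \dot H_x^{1+s}$, which controls the high frequencies of $w$ in $\dot H_x^s$ but not the low frequencies. To close this gap I would argue by approximation: regularize the initial data $u_0, v_0$ (for instance by frequency truncation) so that, via Theorem~\ref{Local well-posedness} and Lemma~\ref{Persistence of regularity}, the corresponding solutions are smooth and their difference lies in $L_t^\infty(L_x^\infty \cap \dot H_x^s)$ a priori; the estimate above then holds for the regularized differences with a constant uniform in the regularization parameter, and one passes to the limit using the continuous dependence from Theorem~\ref{Local well-posedness}. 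This both establishes the claimed regularity $u - v \in L_t^\infty(L_x^\infty \cap \dot H_x^s)$ and propagates the bound to the original solutions.
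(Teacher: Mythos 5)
Your algebraic reduction is correct and is genuinely different from (and cleaner than) the paper's route to the estimate. The paper keeps the coefficient $u$ in \eqref{paralinHS}, so that $z=u-v$ solves the linearized equation with the quadratic remainder $f = zz_x - \tfrac12\partial_x^{-1}(z_x^2)$ as a source, which must then be estimated in $L_t^1(L_x^\infty\cap \dot H_x^s)$ and absorbed by shrinking $T$; you instead observe that $uu_x - vv_x = (\bar u w)_x$ and $u_x^2-v_x^2 = 2\bar u_x w_x$ with $\bar u = \tfrac12(u+v)$, so the difference solves the linearized equation about $\bar u$ with $f=0$. Your accompanying claim that the proof of Lemma~\ref{lin-full} uses only the $L_t^\infty X^s$ bound on the coefficient (through the paraproduct estimates, Lemma~\ref{L_x^2 well-posedness}, and Lemma~\ref{inftyest}), and never the fact that the coefficient itself solves \eqref{HS}, is also correct, so the estimate part of your argument goes through.

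The genuine gap is in your resolution of the obstacle you yourself identify: showing that $w=u-v$ actually lies in $L_t^\infty(L_x^\infty\cap\dot H_x^s)$ so that Lemma~\ref{lin-full} is applicable rather than formal. Your approximation scheme fails at two points. First, the assertion that differences of smooth solutions lie in $L_t^\infty(L_x^\infty\cap\dot H_x^s)$ ``a priori'' is unjustified: the obstruction is low-frequency decay, not smoothness. Frequency truncation gives data difference $P_{<h}(u_0-v_0)$, which at low frequencies is still only in $\dot H_x^s$ (not $L^2$), and $X$-regularity of $u^h, v^h$ controls $\dot H_x^1\cap\dot H_x^N$ but says nothing about $\dot H_x^s$, $s<1$, of the difference at later times; so the persistence problem you are trying to avoid recurs verbatim for the smooth solutions, and your argument merely relocates it. Second, the limit passage ``using the continuous dependence from Theorem~\ref{Local well-posedness}'' is not available: that theorem gives continuity of the flow in $X$ for data converging in $X$, whereas $P_{<h}u_0\to u_0$ only in $X^s$ (the $\dot H_x^2$ norms of the truncations are unbounded). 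Identifying $\lim_h u^h$ with the \emph{given, arbitrary} $C_t^0X^s$ solution $u$ requires uniqueness of $X^s$ solutions, which in the paper is a \emph{consequence} of Lemma~\ref{Regularity for differences} (via Theorem~\ref{Low regularity local well-posedness}), so this route is circular. The paper closes the gap directly, with no approximation: apply $D_x^s$ to the difference equation, move every nonlinear term to the right-hand side as a source $H$, check $H\in L_t^\infty L_x^2$ using only the $X^s$ bounds on $u,v$ and the known regularity $u-v\in L_x^\infty\cap\dot H_x^1\cap\dot H_x^{1+s}$, and then use the existence-and-uniqueness statement of Lemma~\ref{L_x^2 well-posedness} for the resulting Airy-type problem to identify $D_x^s(u-v)$ with its $L_t^\infty L_x^2$ solution; this yields $u-v\in L_t^\infty \dot H_x^s$ (and $L_{t,x}^\infty$) before the linearized estimate is invoked. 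Some such direct identification step is what your proposal is missing.
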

\begin{proof}
Let $z=u-v$, which solves the equation
\begin{equation}\begin{aligned}\label{dif}
    z_t+uz_x+v_xz+z_{xxx}=\frac{\partial_x^{-1}(z_x(u_x+v_x))}{2}.
\end{aligned}\end{equation}
We apply $\displaystyle D_x^s$ and rearrange to consider the Cauchy problem
\begin{align*}
    w_t+w_{xxx}=D_x^s\left(\frac{\partial_x^{-1}(z_x(u_x+v_x))}{2}\right)-D_x^s(uz)_x+D_x^s(zz_x):=H
\end{align*}
with initial data $w(0)=D_x^s(u_0-v_0)\in L_x^2(\mathbb{R})$. For the first term in $H$, we decompose into paraproducts and have the $L^2$ bounds
 \begin{align*} 
  \| D_x^s\D_x^{-1}(T_{z_{x}} (u_x+v_x))\|_{L_x^2} &\lesssim \|z\|_{L_x^\infty} \|u+v\|_{\dot H_x^{1+s}} \\
  \| D_x^s\D_x^{-1}(T_{u_x+v_x} z_{x})\|_{L_x^2} &\lesssim \|z\|_{\dot H_x^{1+s}} \|u+v\|_{L_x^\infty} \\
 \| D_x^s\D_x^{-1}\Pi(u_x+v_x,z_{x})\|_{L_x^2} &\lesssim \|D_x\D_x^{-1}\Pi(u_x+v_x,z_{x})\|_{L_x^{\frac{2}{3-2s}}}\\
 &\lesssim  \|z\|_{\dot H_x^{1}} \|D_x(u+v)\|_{L_x^{\frac{1}{1-s}}}\lesssim \|z\|_{\dot H_x^{1}} \|D_x^{s+1/2}(u+v)\|_{L_x^2}.
 \end{align*}
The other terms are estimated directly using product estimates: 
\begin{equation*}
\begin{aligned}
 \|D_x^s\D_x(uz)\|_{L_x^2}&\lesssim \|u\|_{L_x^\infty}\|z\|_{\dot H_x^{1+s}} + \|z\|_{L_x^\infty}\|u\|_{\dot H_x^{1+s}}\\
 \|D_x^s\D_x (z^2)\|_{L_x^2}&\lesssim \|z\|_{L_x^\infty}\|z\|_{\dot H_x^{1+s}}.
\end{aligned}
\end{equation*}
 Thus, $\displaystyle H\in L_t^\infty L_x^2([0,T]\times\mathbb{R})$. By applying Lemma \ref{L_x^2 well-posedness}, we infer that \eqref{dif} has a unique solution in $L_t^\infty L_x^2([0,T]\times\mathbb{R})$. However, both $w$ and $D_x^sz$ are solutions (in the sense of tempered distributions), hence $w=D_x^sz$, and $z=u-v\in L_t^\infty \dot H_x^s([0,T]\times\mathbb{R})$. It is also clear that $u-v\in L_{t,x}^\infty([0,T]\times\mathbb{R})$.
 
 We now observe that $z$ satisfies the linearized equation \eqref{paralinHS} with source,
 \begin{align*}
     z_t+\partial_x^{-1}(u_{xx}z) + uz_x+z_{xxx}&=zz_x-\frac{\partial_x^{-1}(z_x^2)}{2} =: f.
 \end{align*}
 After taking $T$ small enough (depending on $\displaystyle\|u\|_{L_t^\infty X^s}$), we can apply Lemma~\ref{lin-full}, But first, we have to estimate $f \in L^1_t (L^\infty_x \cap \dot H^s_x)([0,T]\times\mathbb{R})$. For the first term of $f$,
 \begin{align*}
     \|D_x^s\D_x (z^2)\|_{L_x^2}&\lesssim \|z\|_{L_x^\infty}(\|u\|_{X^s}+\|v\|_{X^s}), \\
 \|\D_x (z^2)\|_{L_x^\infty}&\lesssim \|z\|_{L_x^\infty}(\|u\|_{X^s}+\|v\|_{X^s}).
 \end{align*}
 For the second, we have
 \begin{align*}
     \|D_x^s\D_x^{-1}(T_{z_x}z_x)\|_{L_x^2}&\lesssim \|z\|_{\dot H_x^s}\|z_x\|_{L_x^\infty}\lesssim \|z\|_{\dot H_x^s}(\|u\|_{X^s}+\|v\|_{X^s})\\
     \|D_x^s\D_x^{-1}\Pi(z_x,z_x)\|_{L_x^2}&\lesssim \|\Pi(z_x,z_x)\|_{L_x^{\frac{2}{3-2s}}}\lesssim \|D_x^sz\|_{L_x^2}\|D_x^{2-s}z\|_{L_x^{\frac{1}{1-s}}}\lesssim \|z\|_{\dot H_x^s}\|D_x^{3/2}z\|_{L_x^2}\\
     &\lesssim \|z\|_{\dot H_x^s}(\|u\|_{X^s}+\|v\|_{X^s}).
 \end{align*}
 For the $L_x^\infty$ estimate, it suffices to consider the low frequencies since by Sobolev embedding,
  \begin{align*}
      \|\D_x^{-1}(z_x^2)_{> 0}\|_{L_{x}^\infty}\lesssim \|\D_x^{-1}(z_x^2)_{> 0}\|_{\dot H^s_x}.
  \end{align*} 
   We then have for the low frequencies
 \begin{align*}
     \|P_{\leq 0}\D_x^{-1}(\Pi({z_{x}},z_x))\|_{L_x^\infty}& \lesssim \|P_{\leq 0}\Pi(z_{x},z_x)\|_{L_x^1}\lesssim \|z\|_{\dot H_x^s} \|z\|_{\dot H_x^{2-s}} \lesssim \|z\|_{\dot H_x^s}(\|u\|_{X^s}+\|v\|_{X^s}) \\     
  \| P_{\leq 0}\D_x^{-1}(T_{z_{x}}z_x)\|_{L_x^\infty} &\lesssim \| \D_x^{-1}(T_{z_{x}}z_x)\|_{H_x^s}\lesssim \|z\|_{L_x^\infty}(\|u\|_{X^s}+\|v\|_{X^s}).
 \end{align*}
 Thus,
 \[
 \|f\|_{L^1_t (L^\infty_x \cap \dot H^s_x)}\lesssim T\|z\|_{L_t^\infty(L_x^\infty\cap\dot H_x^s)}(\|u\|_{L_t^\infty X^s}+\|v\|_{L_t^\infty X^s}).
 \]
 Thus, we get that
 \begin{align*}
 \|w\|_{L_t^\infty(L_x^\infty\cap\dot H_x^s)}&\lesssim_{\|u\|_{L_t^\infty X^s}} \|w_0\|_{L_x^\infty\cap\dot H_x^s} + \|f\|_{L^1_t (L^\infty_x \cap \dot H^s_x)}\\
 &\lesssim _{\|u\|_{L_t^\infty X^s}} \|w_0\|_{L_x^\infty\cap\dot H_x^s} + T\|z\|_{L_t^\infty(L_x^\infty\cap\dot H_x^s)}(\|u\|_{L_t^\infty X^s}+\|v\|_{L_t^\infty X^s}).
\end{align*}
 After further shrinking $T$ (depending on $\displaystyle \|(u,v)\|_{L_t^\infty X^s}$, Lemma~\ref{lin-full} implies the desired conclusion.
\end{proof}

 \section{Local well-posedness at low regularity}\label{s:lowreg}
 
 In this section, we prove Theorem \ref{Low regularity local well-posedness}. As we have already noticed at the end of Section~\ref{s:gwp}, this will also imply Theorem \ref{Low regularity global well-posedness}.
 
 \
 
 Let $R>0$ be arbitrary. Given data $u_0$ satisfying $\|u_0\|_{X^s} < R$, we consider the corresponding regularized data
\[
u^h_0 = P_{< h} u_0.
\]
  Since $u_0^h \rightarrow u_0$ in $X^s$, we may assume that $\displaystyle \|u_0^h\|_{X^s}<R$ for all $h$.

We construct a uniform $\dot H_x^1 \cap \dot H_x^{1 + s}$ frequency envelope $\{c_k\}_{k\geq 0}$ for $u_0$ having the following properties:

\begin{enumerate}
     \item[a)]Uniform bounds:     
     \[ \|P_k(u_0^h)_x\|_{H_x^1\cap\dot{H}_x^{1+s}}\lesssim c_k\]
     
     \item[b)]High frequency bounds:     
     \[\|u_0^h\|_{\dot{H}_x^1\cap\dot{H}_x^{2+s}}\lesssim 2^hc_h\]
     
     \item[c)]Difference bounds:     
     \[\|u_0^{h+1}-u_0^h\|_{\dot H^s}\lesssim 2^{-h}c_h\]
     
     \item[d)]Limit as $h\rightarrow\infty$:     
     \[ D_x(u_0^h)\rightarrow D_x(u_0) \in H_x^s\]
     
 \end{enumerate}
 
 \
 
 By Theorem \ref{Global well-posedness} and Lemma \ref{Persistence of regularity}, $u_0^h$ generate global smooth solutions $u^h$. Corollary \ref{Uniform bounds} enables us to pick $T=T(R)>0$ such that the hypotheses of Lemma \ref{Regularity for differences} can be applied to any $C_t^0X^s([0,T]\times\mathbb{R})$-solutions with initial data whose $X^s$-norm is smaller than $R$. Moreover, we also obtain uniform bounds for such solutions, including the family $(u^h)_{h\in\mathbb{Z}}$. We now get that
    \[ 
    \|u^h\|_{C_t^0(\dot{H}_x^1\cap\dot{H}_x^{2+s})}\lesssim 2^{h}c_h,
     \]
 and
\[
\|u^{h+1}-u^h\|_{C_t^0\dot{H}_x^s}\lesssim 2^{-h}c_h
\]
By interpolation, we infer that
\[
\|u^{h+1}-u^h\|_{C_t^0\dot{H}_x^{1+s}}\lesssim c_h.
\]
Thus, for $h\geq 0$,
\[
\|u^{h+1}-u^h\|_{C_t^0(\dot{H}_x^1\cap\dot{H}_x^{1+s})}\lesssim c_h
\]
As in \cite{LWN}, we get that
\[
\|P_k u^h \|_{C_t^0(\dot H_x^1 \cap \dot H_x^{1 + s})} \lesssim c_k.
\]
and that
\[
\|u^{h+k}-u^h\|_{C_t^0(\dot H_x^1 \cap \dot H_x^{1 + s})}\lesssim c_{h\leq\cdot<h+k}=\left(\sum_{\substack{n=h}}^{h+k-1}c_n^2\right)^{\frac{1}{2}}
\]
for every $k\geq 1$. Thus, $u^h$ converges to an element $u$ belonging to $C_t^0(\dot H_x^1 \cap \dot H_x^{1 + s})([0,T]\times\mathbb{R})$.  Moreover, we also obtain
\begin{equation}\label{convergence estimate}
\begin{aligned}
\|u^h - u\|_{C_t^0(\dot H_x^1 \cap \dot H_x^{1 + s})} &\lesssim c_{\geq h}=\left(\sum_{\substack{n=h}}^{\infty} c_n^2\right)^{\frac{1}{2}}.
\end{aligned}
\end{equation}

For pointwise convergence, we use Sobolev embedding for the high frequencies,
\[
\|(u^{h+k})_{>0}-(u^h)_{>0}\|_{C_t^0L_x^\infty}\lesssim\|u^{h+k}-u^h\|_{C_t^0\dot H_x^1}.
\]
and the estimate \eqref{ptwise-diff} for the low frequencies:
\[
\|(u^{h+k})_{\leq 0}-(u^h)_{\leq 0}\|_{C_t^0L_x^\infty}\lesssim \|u_0^{h+k}-u_0^h\|_{L_x^\infty}+TR\|u^{h+k}-u^h\|_{C_t^0\dot H_x^1}.
\]
We conclude that $u^h \rightarrow u\in C_t^0X^s([0,T]\times\mathbb{R})$.

\

Lemma \ref{Regularity for differences} also implies uniqueness for \eqref{HS}. For continuity with respect to the initial data, consider a sequence
 \[
 u_{0j}\rightarrow u_0 \in X^s
 \]
 and an associated sequence of $\displaystyle\dot H_x^1\cap\dot H_x^{1+s}$-frequency envelopes $\{c^j_k\}_{k\geq 0}$, each satisfying the analogous properties enumerated above for $c_k$, and further such that $c^j \rightarrow c$ in $l^2(\mathbb{Z})$.
 
  We may assume that $\|u_{0j}\|_{X^s}<R$ for every $j\geq 0$. As before, we get uniform bounds for $(u_j^h)_{(j,h)\in\mathbb{N}\times\mathbb{Z}}$, and we can interpolate to conclude 
\begin{align*}
\|u_j^{h+1}-u_j^h\|_{C_t^0(\dot{H}_x^1\cap\dot{H}_x^{1+s})}&\lesssim c^j_h
\end{align*}
and
\begin{align*}
\|P_k u_j^h \|_{C_t^0(\dot H_x^1 \cap \dot H_x^{1 + s})}&\lesssim c^j_k, \\
\|u_j^{h+k}-u_j^h\|_{C_t^0(\dot H_x^1 \cap \dot H_x^{1 + s})}&\lesssim c^j_{h\leq\cdot<h+k}=\left(\sum_{\substack{n=h}}^{h+k-1}(c^j_n)^2\right)^{\frac{1}{2}}, \\
\|u_j^h - u_j\|_{C_t^0(\dot H_x^1 \cap \dot H_x^{1 + s})} &\lesssim c^j_{\geq h}=\left(\sum_{\substack{n=h}}^{\infty} (c^j_n)^2\right)^{\frac{1}{2}}.
\end{align*}
 
Using the triangle inequality, we write
\begin{align*}
\|u_j - u\|_{C_t^0(\dot H_x^1 \cap \dot H_x^{1 + s})} &\lesssim \|u^h - u\|_{C_t^0(\dot H_x^1 \cap \dot H_x^{1 + s})}+\|u_j^h - u_j\|_{C_t^0(\dot H_x^1 \cap \dot H_x^{1 + s})}+\|u_j^h - u^h\|_{C_t^0(\dot H_x^1 \cap \dot H_x^{1 + s})}\\
&\lesssim c_{\geq h}+c^j_{\geq h}+\|u_j^h - u^h\|_{C_t^0(\dot H_x^1 \cap \dot H_x^{1 + s})}
\end{align*}
For every fixed $h$, Theorem \ref{Global well-posedness} tells us that $u_j^h \rightarrow u^h$ in $X$. This implies that $u_j \rightarrow u$ in $C_t^0(\dot H_x^1 \cap \dot H_x^{1 + s})([0,T]\times\mathbb{R})$.
For pointwise estimates, by applying Sobolev embeddings and using Bernstein's inequalities, we get that 
\begin{align*}
\|(u_j)_{>0}-u_{>0}\|_{C_t^0 L_x^\infty}\lesssim \|u_j-u\|_{C_t^0(\dot H_x^1\cap \dot H_x^{1+s})}.
\end{align*} 
Besides this, \eqref{ptwise-diff} implies that 
\begin{align*}
    \|(u_j)_{\leq 0}-u_{\leq 0}\|_{C_t^0L_x^\infty}\lesssim\|(u_j)_{\leq 0}-u_{\leq 0}\|_{C_t^0L_x^\infty}+CT\|u_j-u\|_{C_t^0\dot H_x^1}.
\end{align*}
Therefore, $u_j\rightarrow u$ in $C_t^0 X^s([0,T]\times\mathbb{R})$. This finishes the proof.

\bibliographystyle{plain}
\bibliography{Bibliography}

\end{document}